\documentclass[12pt]{article}
\usepackage{graphics}
\usepackage{latexsym}
\usepackage{amsmath}
\usepackage{amsfonts}
\def\ds{\displaystyle}

\def\bar{\overline}

\def\bar{\overline}
\def\mbf{\mathbf}

\def\bar{\overline}

\def\Im{{\rm Im}}

\def\Im{{\rm Im}}
\newcommand{\R}{{\mathbb R}}
\newcommand{\C}{{\mathbb C}}

\newtheorem{lemma}{Lemma}[section]
\newtheorem{theorem}{Theorem}[section]

\newenvironment{proof}{\noindent {\it Proof}:}{$\Box$}

\usepackage{amssymb}
\usepackage{amsmath}
\usepackage{graphicx}

\begin{document}

\title{Extrapolation-based implicit-explicit general linear methods}

\author{A. Cardone,%
\thanks{Dipartimento di Matematica,
Universit\`{a} degli studi di Salerno, Fisciano (Sa),
84084 Italy,
\mbox{e-mail}: ancardone@unisa.it.
The work of this author was supported by travel fellowship from
the Department of Mathematics, University of Salerno.
} \
Z. Jackiewicz,%
\thanks{Department of Mathematics, Arizona State University,
Tempe, Arizona 85287,
\mbox{e-mail}: jackiewicz@asu.edu, 
and AGH University of Science and Technology, Krak\'ow, Poland.
%The work of this author was partially
%supported by the National Science Foundation under grant NSF
%DMS--0509597.
} \ A. Sandu,%
\thanks{Department of Computer Science, Virginia Polytechnic Institute \& State University,
Blacksburg, Virginia 24061,
\mbox{e-mail}: sandu@cs.vt.edu.
} \ and H. Zhang%
\thanks{Department of Computer Science, Virginia Polytechnic Institute \& State University,
Blacksburg, Virginia 24061,
\mbox{e-mail}: zhang@vt.edu.}
}

\maketitle

\textbf{Abstract}
For many systems of differential equations modeling problems in science and
engineering, there are natural splittings of the right hand side into two
parts, one non-stiff or mildly stiff, and the other one stiff.
For such systems implicit-explicit (IMEX) integration combines an explicit scheme for the non-stiff part with an implicit scheme for the stiff part.
In a recent series of papers two of the authors (Sandu and Zhang) have 
developed  IMEX GLMs, a family of implicit-explicit schemes based on general linear methods. 
It has been shown that, due to their high stage order, IMEX GLMs require no additional coupling order conditions, and are not marred by order reduction.
This work develops a new extrapolation-based approach to construct practical IMEX GLM pairs of high order.
%We analyze convergence and stability of these methods
%when the implicit and explicit parts interact with each other.
We look for methods with large absolute stability region, assuming that the
implicit part of the method is $A$- or $L$-stable. We provide examples of
IMEX GLMs with optimal stability properties. Their application to a two dimensional test problem confirms the theoretical findings.
\vspace{3mm}

{\bf Key words.}
IMEX methods, general linear methods, error analysis, order conditions, 
stability analysis
\vspace{2mm}

\newpage

\setcounter{equation}{0}
\setcounter{figure}{0}
\setcounter{table}{0}

%\tableofcontents

\section{Introduction} \label{sec1}
Many practical problems in science and engineering are modeled by large systems
of ordinary differential equations (ODEs) which arise from discretization in space
of partial differential equations (PDEs) by finite difference methods,
finite elements or finite volume methods, or pseudospectral methods.
For such systems there are often natural splittings of the right hand sides
of the differential systems into two parts, one of which is non-stiff or
mildly stiff, and suitable for explicit time integration, and the other part is stiff,
and suitable for implicit time integration. Such systems can be written in the form
\begin{equation} \label{eq1.1}
\left\{
\begin{array}{ll}
y'(t)=f\big(y(t)\big)+g\big(y(t)\big), & t\in[t_0,T], \\
y(t_0)=y_0,
\end{array}
\right.
\end{equation}
where $f(y)$ represents the non-stiff processes, for example advection,
and $g(y)$ represents stiff processes, for example diffusion or chemical reaction,
in semi-discretization of advection-diffusion-reaction equations \cite{hv03}.

Implicit-explicit (IMEX) integration approach discretizes the non-stiff part $f(y)$ is
with an explicit method, and the stiff part $g(y)$ with an implicit, stable method.
This strategy seeks to ensure the numerical stability of the solution of \eqref{eq1.1}
while reducing the amount of implicitness, and therefore the  overall computational effort.
IMEX multistep methods were introduced by Crouzeix \cite{cro80}
and Varah \cite{var80} and further analyzed in \cite{arw95,fhv97}.
IMEX Runge-Kutta methods have been investigated in \cite{ars97,cfn01,kc03,pr00,pr05,zho96}.

In a recent series of papers the last two authors and their collaborators have
proposed the new IMEX GLM family of
implicit-explicit schemes based on general linear methods.
A general formalism for partitioned GLMs and their order conditions
was developed by Zhang and Sandu \cite{zs13}. The partitioned method formalism was then used to construct IMEX GLMs. The
starting and ending procedures, linear stability, and stiff convergence properties of the new family have been analyzed. Zhang and Sandu
examined practical methods of second
order in \cite{zs12} and of third order in \cite{zs13}.
A class of IMEX two step Runge-Kutta (TSRK)
methods was proposed by Zharovski and Sandu \cite{zhasan13}.

The results in \cite{zs12,zs13,zhasan13} prove that the general linear framework is well suited for the construction of multi-methods. Specifically, owing to the high stage orders, no coupling conditions are needed to ensure the order of accuracy of the partitioned GLM \cite{zs13}. In addition, it has been shown that IMEX GLMs are particularly attractive for solving stiff problems, where other multistage methods may suffer from order reduction \cite{zs13}.

This paper extends our previous work \cite{zs12,zs13,zhasan13} and develops a new extrapolation-based approach
for the construction of practical IMEX GLM schemes of high order and high stage order.

The organization of this paper is as follows.
General linear methods and the implicit-explicit variants are reviewed in Section \ref{sec:imex-glm}.
The new extrapolation-based IMEX GLMs are derived in Section \ref{sec:eximex}, and their order conditions are presented.
The stability analysis is performed in Section \ref{sec:stability} and specific methods are constructed in Section \ref{sec:construction}.
Numerical experiments are presented in Section \ref{sec:numerics}, and Section \ref{sec:conclusions} gives some concluding remarks and plans for future work.

%%%%%%%%%%%%%%%%%%%%%%%%%%
\section{Implicit-explicit general linear methods}\label{sec:imex-glm}
%%%%%%%%%%%%%%%%%%%%%%%%%%
\setcounter{equation}{0}
\setcounter{figure}{0}
\setcounter{table}{0}

In this section we briefly review GLMs and the IMEX GLM family.

The GLMs for ODEs were introduced by Burrage and Butcher \cite{bb80}
and further investigated in \cite{bur95,but87,but03,cj12,cjm,hlw02,hnw93,hw96}.
We also refer the reader to the review article \cite{but06a} and the recent
monograph \cite{jac09} and references therein.

A diagonally implicit GLM for (\ref{eq1.1}) is defined by
\begin{equation} \label{eq1.2}
\left\{
\begin{array}{lcl}
Y_i^{[n+1]} & = & h\ds\sum_{j=1}^ia_{ij}\Big(f\big(Y_j^{[n+1]}\big)
+g\big(Y_j^{[n+1]}\big)\Big)+\ds\sum_{j=1}^ru_{ij}y_j^{[n]},
\quad i=1,2,\ldots,s \\ [3mm]
y_i^{[n+1]} & = & h\ds\sum_{j=1}^sb_{ij}\Big(f\big(Y_j^{[n+1]}\big)
+g\big(Y_j^{[n+1]}\big)\Big)+\ds\sum_{j=1}^rv_{ij}y_j^{[n]},
\quad i=1,2,\ldots,r, \\ [3mm]
\end{array}
\right.
\end{equation}
$n=0,1,\ldots,N-1$.
Here, $N$ is a positive integer, $h=(T-t_0)/N$, $t_n=t_0+nh$, $n=0,1,\ldots,N$,
$Y_i^{[n+1]}$ are approximations of stage order $q$ to $y(t_n+c_ih)$,
i.e.,
\begin{equation} \label{eq1.3}
Y_i^{[n+1]}=y(t_n+c_ih)+O(h^{q+1}),
\quad
i=1,2,\ldots,s,
\end{equation}
$y_i^{[n]}$ are approximations of order $p$ to the
linear combinations of the derivatives of the solution $y$ at the point $t_n$,
i.e.,
\begin{equation} \label{eq1.4}
y_i^{[n]}=\ds\sum_{k=0}^pq_{ik}h^ky^{(k)}(t_n)+O(h^{p+1}),
\quad
i=1,2,\ldots,r,
\end{equation}
and $y$ is the solution to (\ref{eq1.1}).
These methods can be characterized by the abscissa vector
$\mbf{c}=[c_1,\ldots,c_s]^T$, the coefficient matrices
$\mbf{A}=[a_{ij}]\in\R^{s\times s}$,
$\mbf{U}=[a_{ij}]\in\R^{s\times r}$,
$\mbf{B}=[a_{ij}]\in\R^{r\times s}$,
$\mbf{V}=[a_{ij}]\in\R^{r\times r}$,
the vectors $\mbf{q}_0,\ldots,\mbf{q}_s\in\R^{r}$ defined by $\mbf{q}_i = [q_{j,i}]_{1 \le j \le r}$,
and four integers: the order $p$, the stage order $q$,
the number of external approximations $r$, and the number
of stages or internal approximations $s$.

%Let
%$$
%y^{[n+1]}=\left[
%\begin{array}{c}
%y_1^{[n+1]} \\
%\vdots \\
%y_r^{[n+1]}
%\end{array}
%\right],
%\quad
%y^{[n]}=\left[
%\begin{array}{c}
%y_1^{[n]} \\
%\vdots \\
%y_r^{[n]}
%\end{array}
%\right],
%\quad
%Y^{[n+1]}=\left[
%\begin{array}{c}
%Y_1^{[n+1]} \\
%\vdots \\
%Y_s^{[n+1]}
%\end{array}
%\right],
%$$
%$$
%f\big(Y^{[n+1]}\big)=\left[
%\begin{array}{c}
%f\big(Y_1^{[n+1]}\big) \\
%\vdots \\
%f\big(Y_s^{[n+1]}\big)
%\end{array}
%\right],
%\quad
%g\big(Y^{[n+1]}\big)=\left[
%\begin{array}{c}
%g\big(Y_1^{[n+1]}\big) \\
%\vdots \\
%g\big(Y_s^{[n+1]}\big)
%\end{array}
%\right],
%$$
The method (\ref{eq1.2}) can be written in a compact form
\begin{equation} \label{underlying-GLM}
\left\{
\begin{array}{l}
Y^{[n+1]}=h(\mbf{A}\otimes \mbf{I})\Big(f\big(Y^{[n+1]}\big)+g\big(Y^{[n+1]}\big)\Big)
+(\mbf{U}\otimes \mbf{I})y^{[n]}, \\ [3mm]
y^{[n+1]}=h(\mbf{B}\otimes \mbf{I})\Big(f\big(Y^{[n+1]}\big)+g\big(Y^{[n+1]}\big)\Big)
+(\mbf{V}\otimes \mbf{I})y^{[n]},
\end{array}
\right.
\end{equation}
$n=0,1,\ldots,N-1$, and the relation (\ref{eq1.4}) takes the form
\begin{equation} \label{eq1.6}
y^{[n]}=\ds\sum_{k=0}^p\mbf{q}_kh^ky^{(k)}(t_n)+O(h^{p+1}).
\end{equation}

Applying \eqref{eq1.2} to the basic test equation
$y'(t)=\lambda y(t)$, $t\geq 0$, $\lambda \in \C$, leads to the recurrence
equation
$$
y^{[n+1]}=\mbf{S}(z)y^{[n]},
\quad
n=0,1,\ldots,
$$
$z=h\lambda$, with the stability matrix given by
\begin{equation} \label{eq1.7}
\mbf{S}(z)=\mbf{V}+z\mbf{B}(\mbf{I}-z\mbf{A})^{-1}\mbf{U}.
\end{equation}
We also define the stability polynomial $\eta(w,z)$ by
\begin{equation} \label{eq1.8}
\eta(w,z)=\det\big(w\mbf{I}-\mbf{S}(z)\big).
\end{equation}
The region of absolute stability of the method (\ref{eq1.2}) is
the subset of the complex plane
\begin{equation} \label{eq1.9}
\mathcal{A}=\big\{z\in\C: \ \textrm{all roots} \ w_i(z) \ \textrm{of} \ \eta(w,z) \
\textrm{are in the unit circle}\big\}.
\end{equation}
The traditional concepts of $A(\alpha)$-stability,  $A$-stability, and $L$-stability
apply directly to GLMs via \eqref{eq1.9}.

In this paper we will examine only methods of high stage order, i.e., methods
where $q=p-1$ or $q=p$. It has been shown in \cite{bj93,but93,jac09} that
%
%\begin{theorem} (\cite{but93,jac09}) \label{th2.1}
%The GLM (\ref{eq1.2}) has order $p$ and stage order $q=p$ if and only if
%\begin{equation} \label{eq2.6}
%e^{\mbf{c}z}=z\mbf{A}e^{\mbf{c}z}+\mbf{U}\mbf{w}(z)+O(z^{p+1}),
%\end{equation}
%\begin{equation} \label{eq2.7}
%e^z\mbf{w}(z)=z\mbf{B}e^{\mbf{c}z}+\mbf{V}\mbf{w}(z)+O(z^{p+1}),
%\end{equation}
%where the vector $\mbf{w}(z)$ is defined by
%$$
%\mbf{w}(z)=\ds\sum_{k=0}^p\mbf{q}_kz^k
%=\mbf{q}_0+\mbf{q}_1z+\cdots+\mbf{q}_pz^p.
%$$
%\end{theorem}
%%
%\begin{theorem} (\cite{bj93,jac09}) \label{th2.2}
% The GLM (\ref{eq1.2}) has order $p$ and stage order $q=p-1$ if and only if
%\begin{equation} \label{eq2.8}
%e^{\mbf{c}z}=z\mbf{A}e^{\mbf{c}z}+\mbf{U}\mbf{w}(z)
%+\bigg(\ds\frac{\mbf{c}^p}{p!}-\ds\frac{\mbf{A}\mbf{c}^{p-1}}{(p-1)!}-\mbf{U}\mbf{q}_p\bigg)
%+O(z^{p+1}),
%\end{equation}
%and (\ref{eq2.7}), where $\mbf{w}(z)$ is defined as in Theorem~\ref{th2.1}.
%\end{theorem}
%
%\begin{theorem} (\cite{bj93,but93,jac09}) \label{th2.1}
the GLM (\ref{eq1.2}) has order $p$ and stage order $q=p$ or $q=p-1$ if and only if
\begin{eqnarray}
\label{stage-order-cond}
\mbf{c}^k-k\,\mbf{A}\, \mbf{c}^{k-1} - k!\, \mbf{U}\, \mbf{q}_k &=& 0\,, \quad k=0,1,\dots,q\,,  \quad \textnormal{and}\\
\label{order-cond}
\sum_{\ell=0}^k \frac{k!}{\ell!}\, \mbf{q}_{k-\ell} - k\,\mbf{B}\, \mbf{c}^{k-1} - k!\, \mbf{V}\, \mbf{q}_k &=& 0\,, \quad k=0,1,\dots,p\,.
\end{eqnarray}
%
%\end{theorem}

%The interval $\mathcal{I}$ of absolute stability is the intersection
%of $\mathcal{A}$ with the negative real axis, i.e., $\mathcal{I}=\mathcal{A}\cap \R$.
%
%The method (\ref{eq1.2}) is said to be $A(\alpha)$-stable,
%$\alpha\in(0,\pi/2]$,
%if its region of absolute stability contains the set $\mathcal{A}_{\alpha}$
%defined by
%\begin{equation} \label{eq1.10}
%\mathcal{A}_{\alpha}=
%\Big\{z\in\C: \ \Re(z)<0 \quad \textrm{and} \quad \big|\Im(z)\big|\leq \tan(\alpha)\big|\Re(z)\big|\Big\}.
%\end{equation}
%The method is said to be $A$-stable if it is $A(\pi/2)$-stable,
%i.e., if its region of absolute stability contains
%the negative half plane
%$$
%\big\{z\in\C: \ \Re(z)<0\big\}\subset \mathcal{A}.
%$$
%The GLM (\ref{eq1.2}) is $L$-stable if it is $A$-stable and, in addition,
%$$
%\lim_{z\rightarrow \infty}\rho\big(\mbf{S}(z)\big)=0,
%$$
%where $\rho(\mbf{S}(z))$ stands for the spectral radius of $\mbf{S}(z)$.

An IMEX-GLM \cite[Definition 4]{zhasan13} has the form
\begin{equation} \label{imex-glm}
\left\{
\begin{array}{l}
Y^{[n+1]}=h (\mbf{A}^{\rm exp}\otimes \mbf{I})\,f\big(Y^{[n+1]}\big)+h (\mbf{A}^{\rm imp}\otimes \mbf{I})\,g\big(Y^{[n+1]}\big)
+(\mbf{U}\otimes \mbf{I})y^{[n]}, \\ [3mm]
y^{[n+1]}=h(\mbf{B}^{\rm exp}\otimes \mbf{I})\,f\big(Y^{[n+1]}\big)+h(\mbf{B}^{\rm imp}\otimes \mbf{I})\, g\big(Y^{[n+1]}\big)
+(\mbf{V}\otimes \mbf{I})y^{[n]},
\end{array}
\right.
\end{equation}
where $\mbf{A}^{\rm exp}$, $\mbf{B}^{\rm exp}$ correspond to the explicit part and  $\mbf{A}^{\rm imp}$, $\mbf{B}^{\rm imp}$ to the implicit part. The methods share the same
abscissa $\mbf{c}^{\rm exp}=\mbf{c}^{\rm imp}$, which makes \eqref{imex-glm} internally consistent \cite[Definition 2]{zhasan13}.
The methods also share the same coefficient matrices $\mbf{U}^{\rm exp}=\mbf{U}^{\rm imp}=\mbf{U}$ and $\mbf{V}^{\rm exp}=\mbf{V}^{\rm imp}=\mbf{V}$. The coefficients $\mbf{q}^{\rm exp}_k$, $\mbf{q}^{\rm imp}_k$ in \eqref{eq1.6} can be different,
which means that the implicit and explicit components use different initialization and termination procedures.
An IMEX-GLM \eqref{imex-glm} is a special case of a partitioned GLM \cite[Definition 1]{zhasan13};
while in \eqref{imex-glm} the right hand side is split in two components, stiff and nonstiff, a partitioned GLM
allows for splitting in an arbitrary number of components.

%\begin{theorem} (Order conditions for IMEX-GLMs  \cite[Theorem 2]{zhasan13})
%
%Assume that each component $y_{\{m\}j}^{[n-1]}$ satisfies (\ref{eqn:GLM-order_assumption}).
It has been shown in \cite[Theorem 2]{zhasan13} that
an internally consistent partitioned GLM (and, in particular, the IMEX GLM \eqref{imex-glm})
has order  $p$ and stage order $q \in \{p-1,p\}$
if and only if each component method $\left(\mathbf{A}^{\rm exp},\mathbf{B}^{\rm exp},\mathbf{U},\mathbf{V}\right)$
and $\left(\mathbf{A}^{\rm imp},\mathbf{B}^{\rm imp},\mathbf{U},\mathbf{V}\right)$ has order $p$  and stage order $q$.
%\end{theorem}
%
We note that no additional ``coupling'' conditions are needed for the IMEX GLM (i.e.,
no order conditions that contain coefficients of both the implicit and the explicit schemes).

%%%%%%%%%%%%%%%%%%%%%%%%%%%%%%%%%%%%%%%%
\section{Extrapolation-based IMEX GLMs} \label{sec:eximex}
%%%%%%%%%%%%%%%%%%%%%%%%%%%%%%%%%%%%%%%%
\setcounter{equation}{0}
\setcounter{figure}{0}
\setcounter{table}{0}
\subsection{Method formulation} \label{sec2}

In this section we derive the new extrapolation-based IMEX GLMs.
Consider the following extrapolation formula depending on
stage values $Y_k^{[n]}$ and $Y_k^{[n+1]}$ at two consecutive steps
\begin{equation} \label{eq1.11}
f_j^{[n+1]}=\ds\sum_{k=1}^s\alpha_{jk}f\big(Y_k^{[n]}\big)
+\ds\sum_{k=1}^{j-1}\beta_{jk}f\big(Y_k^{[n+1]}\big),
\quad
j=1,2,\ldots,s.
\end{equation}
Substituting $f_j^{[n+1]}$ in (\ref{eq1.11}) for $f\big(Y_j^{[n+1]}\big)$ in (\ref{eq1.2}) leads to the proposed class of
extrapolation-based IMEX GLMs. The simple example of IMEX method consisting of the explicit
Euler method combined with the $A$-stable implicit $\theta$-method
corresponding to $\theta\geq 1/2$ is presented in \cite{hv03}.

 Substituting (\ref{eq1.11}) into (\ref{eq1.2}) leads to
$$
\begin{array}{lcl}
Y_i^{[n+1]} & = & h\ds\sum_{j=1}^i\ds\sum_{k=1}^sa_{ij}\alpha_{jk}f\big(Y_k^{[n]}\big)
+h\ds\sum_{j=1}^i\ds\sum_{k=1}^{j-1}a_{ij}\beta_{jk}f\big(Y_k^{[n+1]}\big) \\ [3mm]
& + &
h\ds\sum_{j=1}^ia_{ij}g\big(Y_j^{[n+1]}\big)+\ds\sum_{j=1}^ru_{ij}y_j^{[n]},
\quad i=1,2,\ldots,s, \\ [3mm]
y_i^{[n+1]} & = & h\ds\sum_{j=1}^s\ds\sum_{k=1}^sb_{ij}\alpha_{jk}f\big(Y_k^{[n]}\big)
+h\ds\sum_{j=1}^s\ds\sum_{k=1}^{j-1}b_{ij}\beta_{jk}f\big(Y_k^{[n+1]}\big) \\ [3mm]
& + &
h\ds\sum_{j=1}^ib_{ij}g\big(Y_j^{[n+1]}\big)+\ds\sum_{j=1}^rv_{ij}y_j^{[n]},
\quad i=1,2,\ldots,r, \\ [3mm]
\end{array}
$$
$n=0,1,\ldots,N-1$.
Changing the order of summation in the double sums above and then
interchanging the indices $j$ and $k$ we obtain
$$
\begin{array}{lcl}
Y_i^{[n+1]} & = & h\ds\sum_{j=1}^s\ds\sum_{k=1}^ia_{ik}\alpha_{kj}f\big(Y_j^{[n]}\big)
+h\ds\sum_{j=1}^{i-1}\ds\sum_{k=j+1}^ia_{ik}\beta_{kj}f\big(Y_j^{[n+1]}\big) \\ [3mm]
& + &
h\ds\sum_{j=1}^ia_{ij}g\big(Y_j^{[n+1]}\big)+\ds\sum_{j=1}^ru_{ij}y_j^{[n]},
\quad i=1,2,\ldots,s,\\ [3mm]
y_i^{[n+1]} & = & h\ds\sum_{j=1}^s\ds\sum_{k=1}^sb_{ik}\alpha_{kj}f\big(Y_j^{[n]}\big)
+h\ds\sum_{j=1}^{s-1}\ds\sum_{k=j+1}^sb_{ik}\beta_{kj}f\big(Y_j^{[n+1]}\big) \\ [3mm]
& + &
h\ds\sum_{j=1}^sb_{ij}g\big(Y_j^{[n+1]}\big)+\ds\sum_{j=1}^rv_{ij}y_j^{[n]},
\quad i=1,2,\ldots,r.\\
\end{array}
$$
These relations lead to IMEX GLMs of the form
\begin{equation} \label{eq2.1}
\left\{
\begin{array}{lcl}
Y_i^{[n+1]} & = & h\ds\sum_{j=1}^s\bar{a}_{ij}f\big(Y_j^{[n]}\big)
+h\ds\sum_{j=1}^{i-1}a_{ij}^{*}f\big(Y_j^{[n+1]}\big) \\ [3mm]
& + &
h\ds\sum_{j=1}^ia_{ij}g\big(Y_j^{[n+1]}\big)
+\ds\sum_{j=1}^ru_{ij}y_j^{[n]},
\quad i=1,2,\ldots,s,\\ [3mm]
y_i^{[n+1]} & = & h\ds\sum_{j=1}^s\bar{b}_{ij}f\big(Y_j^{[n]}\big)
+h\ds\sum_{j=1}^{s-1}b_{ij}^{*}f\big(Y_j^{[n+1]}\big) \\ [3mm]
& + &
h\ds\sum_{j=1}^sb_{ij}g\big(Y_j^{[n+1]}\big)
+\ds\sum_{j=1}^rv_{ij}y_j^{[n]},
\quad i=1,2,\ldots,r,
\end{array}
\right.
\end{equation}
$n=0,1,\ldots,N-1$,
where the coefficients $\bar{a}_{ij}$, $a_{ij}^{*}$,
$\bar{b}_{ij}$, and $b_{ij}^{*}$ are defined by
$$
\bar{a}_{ij}=\ds\sum_{k=1}^ia_{ik}\alpha_{kj},
\quad
a_{ij}^{*}=\ds\sum_{k=j+1}^ia_{ik}\beta_{kj},
\quad
\bar{b}_{ij}=\ds\sum_{k=1}^sb_{ik}\alpha_{kj},
\quad
b_{ij}^{*}=\ds\sum_{k=j+1}^sb_{ik}\beta_{kj}.
$$
In matrix notation
$$
\bar{\mbf{A}}=[\bar{a}_{ij}]\in\R^{s\times s},
\
\mbf{A}^{*}=[a_{ij}^{*}]\in\R^{s\times s},
\
\bar{\mbf{B}}=[\bar{b}_{ij}]\in\R^{r\times s},
\
\mbf{B}^{*}=[b_{ij}^{*}]\in\R^{r\times s}.
$$
with
$$
\bar{\mbf{A}}=\mbf{A} \bf{\alpha},
\quad
\mbf{A}^{*}=\mbf{A} \mathbf{\beta},
\quad
\bar{\mbf{B}}=\mbf{B} \mathbf{\alpha},
\quad
\mbf{B}^{*}=\mbf{B} \mathbf{\beta},
$$
where $\mathbf{\alpha}=[\alpha_{ij}]\in\R^{s\times s}$, $\mathbf{\beta}=[\beta_{ij}]\in\R^{s\times s}$.
Observe that the matrix $\mbf{A}^{*}$ is strictly lower triangular
and that the last column of the matrix $\mbf{B}^{*}$ is zero.

In matrix notation the extrapolation-based IMEX-GLM is defined by:
\begin{eqnarray}
\nonumber
Y^{[n+1]}&=&h(\bar{\mbf{A}}\otimes \mbf{I})f\big(Y^{[n]}\big)
+h(\mbf{A}^{*}\otimes \mbf{I})f\big(Y^{[n+1]}\big) \\
\label{extrap-imex}
&& +h(\mbf{A}\otimes \mbf{I}) g\big(Y^{[n+1]}\big)+(\mbf{U}\otimes \mbf{I})y^{[n]}, \\
\nonumber
y^{[n+1]}&=&h(\bar{\mbf{B}}\otimes \mbf{I})f\big(Y^{[n]}\big)
+h(\mbf{B}^{*}\otimes \mbf{I})f\big(Y^{[n+1]}\big) \\
\nonumber
&&+h(\mbf{B}\otimes \mbf{I})f\big(Y^{[n+1]}\big) +(\mbf{V}\otimes \mbf{I})y^{[n]},
\end{eqnarray}
$n=0,1,\ldots,N-1$.

The explicit part of (\ref{extrap-imex}), obtained for $g(y)=0$, can be represented as a single GLM
extended over two steps from
$t_{n-1}$ to $t_n$ and $t_n$ to $t_{n+1}$, as follows
\begin{equation} \label{explicit}
\left[
\begin{array}{c}
Y^{[n]} \\
Y^{[n+1]} \\
\hline
Y^{[n+1]} \\
y^{[n+1]}
\end{array}
\right]=\left[
\begin{array}{cc|cc}
\mbf{0} & \mbf{0} & \; \mbf{I} & \mbf{0} \\
\bar{\mbf{A}} & \; \mbf{A}^{*} & \; \mbf{0} & \mbf{U} \\
\hline
\bar{\mbf{A}} & \mbf{A}^{*} & \; \mbf{0} & \mbf{U} \\
\bar{\mbf{B}} & \mbf{B}^{*} & \; \mbf{0} & \mbf{V} \\
\end{array}
\right]\left[
\begin{array}{c}
f\big(Y^{[n]}\big) \\
f\big(Y^{[n+1]}\big) \\ \hline
Y^{[n]} \\
y^{[n]}
\end{array}
\right].
\end{equation}
The abscissa vector is
$\mbf{c}^{\rm exp}=[(\mbf{c}-\mbf{e})^T,\mbf{c}^T]^T$.

Similarly, the implicit part of the IMEX scheme (\ref{eq2.1}) corresponding
to $f(y)=0$ assumes the form
\begin{equation} \label{implicit}
\left[
\begin{array}{c}
Y^{[n]} \\
Y^{[n+1]} \\
\hline
Y^{[n+1]} \\
y^{[n+1]}
\end{array}
\right]=\left[
\begin{array}{cc|cc}
\mbf{0} & \mbf{0} & \ \mbf{I} & \mbf{0} \\
\mbf{0} & \mbf{A} & \ \mbf{0} & \mbf{U} \\
\hline
\mbf{0} & \mbf{A} & \ \mbf{0} & \mbf{U} \\
\mbf{0} & \mbf{B} & \ \mbf{0} & \mbf{V} \\
\end{array}
\right]\left[
\begin{array}{c}
g\big(Y^{[n]}\big) \\
g\big(Y^{[n+1]}\big) \\
\hline
Y^{[n]} \\
y^{[n]}
\end{array}
\right].
\end{equation}
This method has the order and stage order of the underlying GLM (\ref{eq1.2}),
since it is the same method.
The abscissa vector is
$\mbf{c}^{\rm imp}=[(\mbf{c}-\mbf{e})^T,\mbf{c}^T]^T$, and therefore the method \eqref{extrap-imex}
is internally consistent.

\subsection{Construction of the interpolant}

We define the local discretization errors
$\eta(t_n+c_jh)$ of the extrapolation formula (\ref{eq1.11})
by the relation
\begin{equation} \label{eq2.11}
\begin{array}{l}
f\big(y(t_n+c_jh)\big)=\ds\sum_{k=1}^s\alpha_{jk}f\big(y(t_{n-1}+c_kh)\big)+ \\ [3mm]
\quad\quad + \
\ds\sum_{k=1}^{j-1}\beta_{jk}f\big(y(t_n+c_kh)\big)+\eta(t_n+c_jh),
\end{array}
\end{equation}
$j=1,2,\ldots,s$.
Letting $\varphi(t)=f(y(t))$ the relation (\ref{eq2.11}) can be written
in the form
$$
\eta(t_n+c_jh)=
\varphi(t_n+c_jh)-\ds\sum_{k=1}^s\alpha_{jk}\varphi\big(t_n+(c_k-1)h\big)
-\ds\sum_{k=1}^{j-1}\beta_{jk}\varphi(t_n+c_kh),
$$
$j=1,2,\ldots,s$.
Expanding $\varphi(t_n+c_jh)$, $\varphi(t_n+(c_k-1)h)$, and $\varphi(t_n+c_kh)$
into Taylor series around $t_n$ we obtain
$$
\eta(t_n+c_jh)=\ds\sum_{l=0}^p
\bigg(\ds\frac{c_j^l}{l!}-\ds\sum_{k=1}^s\alpha_{jk}\ds\frac{(c_k-1)^l}{l!}
-\ds\sum_{k=1}^{j-1}\beta_{jk}\ds\frac{c_k^l}{l!}\bigg)h^l\varphi^{(l)}(t_n)
+O(h^{p+1}).
$$
Assuming that the extrapolation procedure given by (\ref{eq1.11}) has order $p$, i.e.,
\mbox{$\eta(t_n+c_jh)=O(h^{p})$}, leads to the following system of equations for
the interpolation coefficients:
\begin{equation} \label{eq2.12}
\ds\sum_{k=1}^s\alpha_{jk}(c_k-1)^\ell
=c_j^\ell-\ds\sum_{k=1}^{j-1}\beta_{jk}c_k^\ell,
\quad
\ell=0,1,\ldots,p-1,
\quad
j=1,2,\ldots,s.
\end{equation}

In matrix notation we have
\begin{equation}
\label{interpolation-order}
\alpha\, (\mathbf{c}-\mathbf{e})^\ell + \beta \, \mathbf{c}^\ell =\mathbf{c}^\ell\,, \quad \ell=0,1,\ldots,p-1\,.
\end{equation}

\subsection{Stage and order conditions}

\begin{lemma}
Assume that the underlying GLM \eqref{underlying-GLM} has order $p$ and stage order $q=p$ or $q=p-1$, and that the interpolation formula (\ref{eq1.11})
has order $p$ \eqref{interpolation-order}.
Then the explicit method \eqref{explicit} has order $p$ and stage order $q$.
\end{lemma}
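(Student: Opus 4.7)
The plan is to verify the stage order condition \eqref{stage-order-cond} and the order condition \eqref{order-cond} directly for the enlarged GLM displayed in \eqref{explicit}. Its abscissa is $\mbf{c}^{\rm exp}=[(\mbf{c}-\mbf{e})^T,\mbf{c}^T]^T$ and the natural external approximation vector is
$$
\mbf{q}_k^{\rm exp}=\begin{pmatrix} (\mbf{c}-\mbf{e})^k/k! \\ \mbf{q}_k \end{pmatrix},
$$
since the top $s$ entries of the external input $[Y^{[n]T},y^{[n]T}]^T$ are carried-over stage values whose Taylor expansion around $t_n$ has coefficients $(c_i-1)^k/k!$, while the bottom $r$ entries propagate the underlying GLM's external approximation.

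The engine of the proof is an algebraic identity that collapses the extrapolation coefficients back into those of the underlying GLM. Combining $\bar{\mbf{A}}=\mbf{A}\mathbf{\alpha}$, $\mbf{A}^{*}=\mbf{A}\mathbf{\beta}$, $\bar{\mbf{B}}=\mbf{B}\mathbf{\alpha}$, $\mbf{B}^{*}=\mbf{B}\mathbf{\beta}$ with the interpolation order condition \eqref{interpolation-order} yields, for every $\ell=0,1,\ldots,p-1$,
$$
\bar{\mbf{A}}(\mbf{c}-\mbf{e})^\ell+\mbf{A}^{*}\mbf{c}^\ell=\mbf{A}\mbf{c}^\ell,\qquad \bar{\mbf{B}}(\mbf{c}-\mbf{e})^\ell+\mbf{B}^{*}\mbf{c}^\ell=\mbf{B}\mbf{c}^\ell.
$$

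With this identity in hand, the verification splits along the block decomposition of the enlarged method. For \eqref{stage-order-cond} the top block is immediate, since the first $s$ rows of $\mbf{A}^{\rm exp}$ vanish and the top-left block of $\mbf{U}^{\rm exp}$ is the identity, giving $(\mbf{c}-\mbf{e})^k-(\mbf{c}-\mbf{e})^k=0$; the bottom block reduces to $\mbf{c}^k-k[\bar{\mbf{A}}(\mbf{c}-\mbf{e})^{k-1}+\mbf{A}^{*}\mbf{c}^{k-1}]-k!\,\mbf{U}\mbf{q}_k$, which by the identity above (valid since $k-1\le q-1\le p-1$) collapses to the underlying GLM's stage order condition at $k\le q$. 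For \eqref{order-cond} the Taylor-comparison sum $\sum_{\ell=0}^{k}(k!/\ell!)\,\mbf{q}_{k-\ell}^{\rm exp}$ decouples into two blocks; the top block telescopes via the binomial identity $\sum_{\ell=0}^{k}\binom{k}{\ell}(\mbf{c}-\mbf{e})^{k-\ell}=\mbf{c}^k$, and after invoking the identity of the previous paragraph on the $\bar{\mbf{A}},\mbf{A}^{*}$ and $\bar{\mbf{B}},\mbf{B}^{*}$ terms, the $y^{[n+1]}$-row reduces exactly to the underlying GLM's order condition at $k\le p$, while the $Y^{[n+1]}$-row reduces to the underlying stage order condition, which holds for $k\le q$.

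The main obstacle is the bookkeeping across the block structure — matching the top entries of $\mbf{q}_k^{\rm exp}$ to the Taylor coefficients of the carried-over stages, recognizing the binomial collapse in the top block of \eqref{order-cond}, and respecting the ranges of $k$ for which each hypothesis applies (interpolation for $\ell\le p-1$, underlying stage order for $k\le q$, underlying order for $k\le p$). Beyond the identity of the second paragraph, no further analytic idea is required: the conditions \eqref{stage-order-cond} and \eqref{order-cond} for the enlarged method follow mechanically from the hypotheses on the underlying GLM and on the extrapolation formula.
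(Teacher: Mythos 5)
Your proposal is correct and follows essentially the same route as the paper's proof: the same choice of $\mbf{q}_k^{\rm exp}$, the same collapse of the extrapolation coefficients via $\mbf{A}\bigl(\alpha(\mbf{c}-\mbf{e})^\ell+\beta\,\mbf{c}^\ell\bigr)=\mbf{A}\mbf{c}^\ell$ from \eqref{interpolation-order}, and the same blockwise verification of \eqref{stage-order-cond} and \eqref{order-cond} using the binomial identity \eqref{combinatorial} in the top block. Nothing further is needed.
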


\begin{proof}

The method \eqref{explicit} has the coefficients
\[
\mbf{A}^{\rm exp}=\begin{bmatrix}
\mbf{0} & \mbf{0} \\
\bar{\mbf{A}} & \; \mbf{A}^{*}
\end{bmatrix}\,, \quad
\mbf{B}^{\rm exp}=\begin{bmatrix}
\bar{\mbf{A}} & \mbf{A}^{*}  \\
\bar{\mbf{B}} & \mbf{B}^{*}
\end{bmatrix}\,, \quad
\mbf{c}^{\rm exp}=\begin{bmatrix} \mbf{c}-\mbf{e} \\ \mbf{c} \end{bmatrix}\,.
\]
where $\mbf{e}=[1,\ldots,1]^T\in\R^s$,
$$
\mbf{U}^{\rm exp}=\left[
\begin{array}{cc}
\mbf{I} & \mbf{0} \\
\mbf{0} & \mbf{U}
\end{array}
\right],
\quad
\mbf{V}^{\rm exp}=\left[
\begin{array}{cc}
\mbf{0} & \mbf{U} \\
\mbf{0} & \mbf{V}
\end{array}
\right]\,,
$$
and the vectors
$$
{\mbf{q}}_0^{\rm exp}=\left[
\begin{array}{c}
\mbf{e} \\
%\mbf{e} \\
%\mbf{e} \\
\mbf{q}_0
\end{array}
\right]\,;
\quad
{\mbf{q}}_i^{\rm exp}=
\begin{bmatrix}
\frac{(\mbf{c}-\mbf{e})^i}{i!} \\
%\frac{\mbf{c}^p}{p!} \\
%\frac{\mbf{c}^p}{p!} \\
\mbf{q}_i
\end{bmatrix} \,, ~~ i=1,\dots,p\,.
$$
We verify directly that the extrapolation-based explicit method \eqref{explicit} has stage order $q$,
i.e., it satisfies equations \eqref{stage-order-cond}
\begin{eqnarray*}
&& \ds (\mbf{c}^{\rm exp})^k - k\,\mbf{A}^{\rm exp}\, (\mbf{c}^{\rm exp})^{k-1}-k!\,\mbf{U}^{\rm exp}\, \mbf{q}^{\rm exp}_k \\
&& =
\begin{bmatrix}
\mathbf{0} \\
\ds \mbf{c}^k- k\, \mbf{A} \left( \alpha\, (\mbf{c}-\mathbf{e})^{k-1} + \beta\, \mathbf{c}^{k-1} \right)-k!\, \mbf{U}\mbf{q}_k
\end{bmatrix} \\
&& =
\begin{bmatrix}
\mathbf{0} \\
\ds \mbf{c}^k-k\, \mbf{A}\mbf{c}^{k-1}-k!\,\mbf{U}\mbf{q}_k
\end{bmatrix}\quad \{ \textrm{from }\eqref{interpolation-order}\} \\
&& = \begin{bmatrix}
\mathbf{0} \\ \mathbf{0}
\end{bmatrix} \,, \quad k = 0,1,\dots,q \quad \{ \textrm{from }\eqref{stage-order-cond}\} \,.
\end{eqnarray*}
From \eqref{order-cond} we verify that the method \eqref{explicit} has order $p$
\begin{eqnarray*}
&& \sum_{\ell=0}^k \frac{k!}{\ell!}\, \mbf{q}^{\rm exp}_{k-\ell} - k\,\mbf{B}^{\rm exp}\, (\mbf{c}^{\rm exp})^{k-1} - k!\, \mbf{V}^{\rm exp}\, \mbf{q}^{\rm exp}_k \\
&& =  \sum_{\ell=0}^k \frac{k!}{\ell!}\,\begin{bmatrix} \ds \frac{(\mbf{c}-\mbf{e})^{k-\ell}}{(k-\ell)!} \\ \ds \mbf{q}_{k-\ell} \end{bmatrix}
- k\, \begin{bmatrix} \mathbf{A}\, \left( \alpha\, (\mathbf{c}-\mathbf{e})^{k-1} + \beta\, \mathbf{c}^{k-1}   \right) \\   \mathbf{B}\, \left( \alpha\, (\mathbf{c}-\mathbf{e})^{k-1} + \beta\, \mathbf{c}^{k-1}   \right) \end{bmatrix}
- k!\,\begin{bmatrix}  \mathbf{U}\, \mathbf{q}_k \\  \mathbf{V}\, \mathbf{q}_k \end{bmatrix} \\
&& = \begin{bmatrix}
\ds\sum_{\ell=0}^k \frac{k!}{\ell!}  \frac{(\mbf{c}-\mbf{e})^{k-\ell}}{(k-\ell)!} - k\,\mbf{A}\mbf{c}^{k-1}- k!\, \mathbf{U}\, \mathbf{q}_k \\
\ds\sum_{\ell=0}^k \frac{k!}{\ell!} \mbf{q}_{k-\ell} - k\,\mbf{B}\mbf{c}^{k-1}- k!\, \mathbf{V}\, \mathbf{q}_k
\end{bmatrix} \quad \{ \textrm{from }\eqref{interpolation-order} \} \\
&& = \begin{bmatrix}
\mathbf{0} \\ \mathbf{0}
\end{bmatrix} \,, \quad k = 0,1,\dots,p  \quad \{ \textrm{from }\eqref{stage-order-cond}, \eqref{order-cond},  \textrm{ and } \eqref{combinatorial} \}\,.
\end{eqnarray*}
For the first component we have used the fact that
\begin{equation}
\label{combinatorial}
\mbf{c}^\ell = \bigl( (\mbf{c}-\mbf{e}) + \mbf{e} \bigr)^\ell = \sum_{\ell=0}^k \frac{\ell!}{k!\,(\ell-k)! } (\mbf{c}-\mbf{e})^{k-\ell}\,.
\end{equation}
\flushright{$\square$}
\end{proof}

To analyze the order and stage order of IMEX GLMs (\ref{eq2.1}) we will
impose some conditions on the local discretization errors of the internal
and external stages of the underlying GLM (\ref{eq1.2}) and on the accuracy of
the extrapolation procedure (\ref{eq1.11}).

We have the following result.
\begin{theorem} \label{th2.3}
Assume that the underlying GLM (\ref{eq1.2}) has order $p$ and stage order $q=p$ or $q=p-1$,
and that the extrapolation procedure (\ref{eq1.11}) has order $p$. Then the IMEX GLM (\ref{eq2.1})
has order $p$ and stage order $q=p$ or $q=p-1$.
\end{theorem}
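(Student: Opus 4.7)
The plan is to reduce Theorem~\ref{th2.3} to the preceding Lemma together with an analogous (and simpler) statement for the implicit component, and then invoke the characterization of IMEX GLM accuracy from \cite[Theorem 2]{zhasan13}, already recalled in Section~\ref{sec:imex-glm}: an internally consistent IMEX GLM has order $p$ and stage order $q\in\{p-1,p\}$ if and only if each of its two component methods has order $p$ and stage order $q$.

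First, I would dispose of internal consistency. From \eqref{explicit} and \eqref{implicit} the two extended component methods share the same abscissa $\mbf{c}^{\rm exp}=\mbf{c}^{\rm imp}=[(\mbf{c}-\mbf{e})^T,\mbf{c}^T]^T$, which is precisely the internal consistency requirement quoted from \cite[Definition 2]{zhasan13}. The explicit component \eqref{explicit} then has order $p$ and stage order $q$ by the Lemma, since its hypotheses (underlying GLM of order $p$, stage order $q$, and interpolation identity \eqref{interpolation-order}) are exactly what we assume in Theorem~\ref{th2.3}.

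Second, for the implicit component \eqref{implicit} I would mirror the computation in the proof of the Lemma, but with the considerable simplification that no extrapolation enters. Using the augmented vectors
$$
\mbf{q}^{\rm imp}_0=\begin{bmatrix} \mbf{e}\\ \mbf{q}_0\end{bmatrix},\qquad
\mbf{q}^{\rm imp}_i=\begin{bmatrix}(\mbf{c}-\mbf{e})^i/i!\\ \mbf{q}_i\end{bmatrix},\quad i=1,\dots,p,
$$
the top block of each of the conditions \eqref{stage-order-cond} and \eqref{order-cond} collapses trivially: the first $s$ rows of $\mbf{A}^{\rm imp}$ and $\mbf{B}^{\rm imp}$ are zero while the top row of $\mbf{U}^{\rm imp}$ is $[\mbf{I}\ \mbf{0}]$, so the residual reduces to $(\mbf{c}-\mbf{e})^k - k!\,\mbf{I}\cdot(\mbf{c}-\mbf{e})^k/k!=\mbf{0}$. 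The bottom block reduces, after applying the combinatorial identity \eqref{combinatorial} for the order check, to precisely the conditions \eqref{stage-order-cond}--\eqref{order-cond} for the quadruple $(\mbf{A},\mbf{B},\mbf{U},\mbf{V})$ with the original $\mbf{q}_k$, which hold by hypothesis. In short, the implicit extended method is just the underlying GLM padded with a pass-through block, so it inherits the order and stage order.

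Having both components at order $p$ and stage order $q$, and internal consistency, \cite[Theorem 2]{zhasan13} immediately yields the conclusion for the IMEX GLM \eqref{eq2.1}. I do not foresee a genuine obstacle: the essential computation, in which the extrapolation matrices $\alpha,\beta$ couple rows via \eqref{interpolation-order}, was already handled in the Lemma; the implicit side is only bookkeeping, and the final step is a direct invocation of the quoted theorem.
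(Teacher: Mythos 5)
Your proof is correct and follows essentially the same route as the paper's: both arguments reduce the theorem to the Lemma for the explicit component, the (essentially trivial) accuracy of the padded implicit component \eqref{implicit}, internal consistency and the shared $\mbf{U}$, $\mbf{V}$, and a direct appeal to the quoted Theorem 2 on IMEX/partitioned GLMs. One minor bookkeeping slip: the first $s$ rows of $\mbf{B}^{\rm imp}$ are $[\mbf{0}\ \ \mbf{A}]$, not zero, so the top block of the \emph{order} check for the implicit component does not collapse outright but reduces, via the identity \eqref{combinatorial}, to the stage-order condition for $(\mbf{A},\mbf{U})$ — this does not affect the conclusion, and the paper itself asserts this component's accuracy without computation.
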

\begin{proof}

The method (\ref{extrap-imex}) is an IMEX GLM  of the form \eqref{imex-glm} \cite[Definition 1]{zs13}.
The explicit (\ref{explicit}) and implicit (\ref{implicit}) components have the same order $p$ and stage order $q$, and they share
the same abscissa vector and the same coefficients
\[
\mbf{c}^{\rm exp} = \mbf{c}^{\rm imp}\,,
\quad
\mbf{U}^{\rm imp}=\mbf{U}^{\rm exp}\,,
\quad
\mbf{V}^{\rm imp}=\mbf{V}^{\rm exp}\,.
\]
The result follows directly from \cite[Theorem 2]{zs13}.
\flushright{$\square$}
\end{proof}

\subsection{Prothero-Robinson convergence of IMEX GLMs} \label{sec3}
The extrapolation IMEX-GLM schemes (\ref{eq2.1})
do not suffer from order reduction phenomenon when applied to stiff
systems of differential equations.
Following \cite{but87,zs13,zhasan13} we consider the Prothero-Robinson
(PR) \cite{pr74} test problem of the form
\begin{equation} \label{eq3.1}
\left\{
\begin{array}{ll}
y'(t)=\mu\big(y(t)-\phi(t)\big)+\phi'(t), & t\geq 0, \\
y(0)=\phi(0),
\end{array}
\right.
\end{equation}
where $\mu\in\C$ has a large and negative real part and $\phi(t)$ is a
slowly varying function.
The solution to (\ref{eq3.1}) is $y(t)=\phi(t)$.
The IMEX scheme (\ref{eq2.1}) is said to be PR-convergent if the application
of (\ref{eq2.1}) to the equation (\ref{eq3.1}) leads to the numerical
solution $y^{[n]}$ whose global error satisfies
$$
\left\|y^{[n]}-\ds\sum_{k=0}^p\mbf{q}_kh^ky^{(k)}(t_n)\right\|=\mathcal{O}(h^p)
\quad
\textrm{as}
\quad
h\rightarrow 0
\quad
\textrm{and}
\quad
h\mu\rightarrow -\infty.
$$
We have the following result.

%
%\begin{theorem} (Prothero-Robinson convergence of IMEX-GLMs \cite[Theorem 3]{zs13})
%\label{thm:PR-convergence}
%Consider the IMEX GLM method (\ref{imex-glm}).
%The explicit part has of order $p$ and stage order $q^{\rm imp} \in \{ p-1,p\}$,
%and the implicit part has order $p$ and stage order $q^{\rm exp} \in \{ p-1,p\}$.
%Assume that
%$h\mu \in S^{\rm imp}$ for all $h > 0$, where $S^{\rm imp}$ is the stability region of the implicit component.
%Then the IMEX GLM method (\ref{imex-glm}) is PR-convergent with order $\min(p,q^{\rm exp})$.
%\end{theorem}
%%
%A direct consequence is the following.
%
\begin{theorem} \label{th3.1}
Assume that the implicit GLM (\ref{eq1.2}) has order
$p$ and stage order $q=p-1$ or $q=p$, and that the extrapolation
formula (\ref{eq1.11}) has order $p$.
Then the IMEX scheme (\ref{eq2.1}) is PR-convergent with
order $\min(p,q)$ as $h\rightarrow 0$, $h\mu\rightarrow -\infty$,
and $h\mu\in\mathcal{S}_I$. Here, $\mathcal{S}_I$ is the stability region of the implicit GLM \eqref{implicit}.
\end{theorem}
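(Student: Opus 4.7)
The plan is to mirror the classical Prothero--Robinson analysis for implicit GLMs, exploiting the fact that the natural IMEX splitting of \eqref{eq3.1} --- namely $g(y,t) = \mu(y-\phi(t))$ treated implicitly and $f(y,t) = \phi'(t)$ treated explicitly via extrapolation --- satisfies $\partial f/\partial y \equiv 0$. Consequently the extrapolation formula \eqref{eq1.11} can only enter the residuals and never the homogeneous part of the linearised error recursion; the stiff contribution reduces to a factor $\mu$ that is handled through the stability matrix of the implicit component.

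First I would introduce the reference values $\bar{Y}^{[n+1]}_i = y(t_n + c_i h)$ and $\bar{y}^{[n]} = \sum_{k=0}^p \mbf{q}_k h^k y^{(k)}(t_n)$, substitute them into \eqref{eq2.1}, and collect internal residuals $\xi^{[n+1]}$ and external residuals $\omega^{[n+1]}$. The stage-order $q$ property of the underlying GLM \eqref{eq1.2} combined with the order-$p$ accuracy of the extrapolation \eqref{interpolation-order} applied to $\varphi(t) = \phi'(t)$ produces $\xi^{[n+1]} = O(h^{q+1})$ and $\omega^{[n+1]} = O(h^{p+1})$, the extrapolation contribution $O(h^p)$ picking up an extra factor of $h$ from the $h\,\bar{\mbf{A}}$ and $h\,\mbf{A}^{*}$ terms. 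Subtracting the residual equations from \eqref{eq2.1} and exploiting $\partial f/\partial y \equiv 0$, $\partial g/\partial y \equiv \mu$, the errors $\epsilon^{[n+1]} = Y^{[n+1]} - \bar{Y}^{[n+1]}$ and $e^{[n]} = y^{[n]} - \bar{y}^{[n]}$ satisfy
\[
\epsilon^{[n+1]} = (\mbf{I}-h\mu\,\mbf{A})^{-1}\bigl(\mbf{U}\,e^{[n]} + \xi^{[n+1]}\bigr),\qquad e^{[n+1]} = \mbf{S}(h\mu)\, e^{[n]} + \delta^{[n+1]},
\]
where $\mbf{S}(h\mu)$ is the stability matrix \eqref{eq1.7} of the implicit scheme \eqref{implicit} and $\delta^{[n+1]} = \bigl(h\mu\,\mbf{B}(\mbf{I}-h\mu\mbf{A})^{-1}\bigr)\xi^{[n+1]} + \omega^{[n+1]}$.

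The assumption $h\mu \in \mathcal{S}_I$ renders $\mbf{S}(h\mu)^n$ uniformly power bounded, and the algebraic identity $h\mu(\mbf{I}-h\mu\mbf{A})^{-1} = -\mbf{A}^{-1} + \mbf{A}^{-1}(\mbf{I}-h\mu\mbf{A})^{-1}$ shows that this amplifying factor remains bounded as $h\mu \to -\infty$, provided $\mbf{A}$ is invertible (which is standard for diagonally implicit GLMs with a nonzero diagonal). Hence $\delta^{[n+1]} = O(h^{q+1})$ uniformly in the stiff regime, and iterating the recursion for $\|e^{[n]}\|$ over $N = O(1/h)$ steps yields $\|e^{[n]}\| = O(h^q) = O(h^{\min(p,q)})$, which is the required PR-convergence order. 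The main technical obstacle is establishing uniform-in-$h\mu$ bounds on both $\mbf{S}(h\mu)^n$ and $h\mu(\mbf{I}-h\mu\mbf{A})^{-1}$; for $L$-stable implicit parts these follow immediately from $\mbf{S}(\infty) = \mbf{0}$, whereas in general they require the standard GLM stability estimates already invoked in \cite{zs13,zhasan13}.
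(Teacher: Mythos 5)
Your argument is essentially correct, but it takes a genuinely different route from the paper: the paper's entire proof is a one-line appeal to Theorem~3 of \cite{zs13} on PR-convergence of general IMEX GLMs, which applies because the preceding results of Section~\ref{sec:eximex} already established that the extrapolation-based pair \eqref{explicit}--\eqref{implicit} is an internally consistent IMEX GLM of order $p$ and stage order $q$. You instead reconstruct the PR analysis from scratch for the specific scheme \eqref{eq2.1}. What your route buys is transparency: it makes explicit why the extrapolation coefficients never enter the homogeneous part of the error recursion (since $\partial f/\partial y\equiv 0$ for the splitting $f=\phi'$, $g=\mu(y-\phi)$), so that the propagation is governed solely by the stability matrix $\mbf{S}(h\mu)$ of the implicit component, and your identity $h\mu(\mbf{I}-h\mu\mbf{A})^{-1}=-\mbf{A}^{-1}+\mbf{A}^{-1}(\mbf{I}-h\mu\mbf{A})^{-1}$ (valid since $(\mbf{I}-h\mu\mbf{A})^{-1}$ commutes with $\mbf{A}$, and $\mbf{A}$ is invertible for the DIMSIMs used here) correctly controls the stiff amplification of the stage residuals. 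The cost is that you must supply the uniform power-boundedness of $\mbf{S}(h\mu)^n$ over $h\mu\in\mathcal{S}_I$ --- note that membership in $\mathcal{S}_I$ as defined in \eqref{eq1.9} only places the eigenvalues in the unit disc, so uniformity in both $n$ and $h\mu$ needs the standard resolvent or compactness argument you allude to but do not carry out --- and you should also record that the starting values are assumed accurate to $O(h^p)$ and that the time components of the stages are exact, as is standard in PR analyses. With those caveats your residual bookkeeping ($\xi^{[n+1]}=O(h^{q+1})$ from stage order plus the extra factor of $h$ on the $O(h^p)$ extrapolation defect, $\omega^{[n+1]}=O(h^{p+1})$, hence $\delta^{[n+1]}=O(h^{\min(p,q)+1})$ and a global error of $O(h^{\min(p,q)})$) matches the claimed order.
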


\begin{proof}
The result follows directly from \cite[Theorem 3]{zs13} on PR-convergence of IMEX-GLMs. \flushright{$\square$}
\end{proof}

\setcounter{equation}{0}
\setcounter{figure}{0}
\setcounter{table}{0}

%%%%%%%%%%%%%%%%%%%%%%%%%%%%%%%%%%
\section{Stability analysis of IMEX GLMs} \label{sec:stability}
%%%%%%%%%%%%%%%%%%%%%%%%%%%%%%%%%%
To analyze stability properties of IMEX GLMs (\ref{eq2.1}) we
use the test equation
\begin{equation} \label{eq4.1}
y'(t)=\lambda_0y(t)+\lambda_1y(t),
\quad
t\geq 0,
\end{equation}
where $\lambda_0$ and $\lambda_1$ are complex parameters.
Applying \eqref{extrap-imex} to (\ref{eq4.1}) and letting
$z_i=h\lambda_i$, $i=0,1$, we obtain
%\begin{equation} \label{eq4.2}
%\left\{
%\begin{array}{lcl}
%Y_i^{[n+1]} & = & z_0\ds\sum_{j=1}^s\bar{a}_{ij}Y_j^{[n]}
%+z_0\ds\sum_{j=1}^{i-1}a_{ij}^{*}Y_j^{[n+1]} \\ [3mm]
%& + &
%z_1\ds\sum_{j=1}^ia_{ij}Y_j^{[n+1]}
%+\ds\sum_{j=1}^ru_{ij}y_j^{[n]},
%\quad i=1,2,\ldots,s,\\ [3mm]
%y_i^{[n+1]} & = & z_0\ds\sum_{j=1}^s\bar{b}_{ij}Y_j^{[n]}
%+z_0\ds\sum_{j=1}^{s-1}b_{ij}^{*}Y_j^{[n+1]} \\ [3mm]
%& + &
%z_1\ds\sum_{j=1}^sb_{ij}Y_j^{[n+1]}
%+\ds\sum_{j=1}^rv_{ij}y_j^{[n]},
%\quad i=1,2,\ldots,r,
%\end{array}
%\right.
%\end{equation}
%$n=0,1,\ldots,N-1$. The vector form of the equation (\ref{eq4.2})
%%takes the form
%$$
%\left\{
%\begin{array}{c}
%Y^{[n+1]}=z_0\bar{\mbf{A}}Y^{[n]}+z_0\mbf{A}^{*}Y^{[n+1]}+z_1\mbf{A}Y^{[n+1]}
%+\mbf{U}y^{[n]}, \\ [3mm]
%y^{[n+1]}=z_0\bar{\mbf{B}}Y^{[n]}+z_0\mbf{B}^{*}Y^{[n+1]}+z_1\mbf{B}Y^{[n+1]}
%+\mbf{V}y^{[n]},
%\end{array}
%\right.
%$$
%or
$$
\left\{
\begin{array}{c}
\big(\mbf{I}-(z_0\mbf{A}^{*}+z_1\mbf{A})\big)Y^{[n+1]}=z_0\bar{\mbf{A}}Y^{[n]}
+\mbf{U}y^{[n]}, \\ [3mm]
-(z_0\mbf{B}^{*}+z_1\mbf{B})Y^{[n+1]}+y^{[n+1]}=z_0\bar{\mbf{B}}Y^{[n]}
+\mbf{V}y^{[n]},
\end{array}
\right.
$$
$n=0,1,\ldots,N-1$. This is equivalent to the matrix recurrence relation
%\begin{equation} \label{eq4.3}
%\left[
%\begin{array}{cc}
%\mbf{I}-(z_0\mbf{A}^{*}+z_1\mbf{A}) & \mbf{0} \\
%-(z_0\mbf{B}^{*}+z_1\mbf{B}) & \mbf{I}
%\end{array}
%\right]\left[
%\begin{array}{c}
%Y^{[n+1]} \\
%y^{[n+1]}
%\end{array}
%\right]=\left[
%\begin{array}{cc}
%z_0\bar{\mbf{A}} & \mbf{U} \\
%z_0\bar{\mbf{B}} & \mbf{V}
%\end{array}
%\right]\left[
%\begin{array}{c}
%Y^{[n]} \\
%y^{[n]}
%\end{array}
%\right].
%\end{equation}
%Since
%$$
%\left[
%\begin{array}{cc}
%\mbf{I}-(z_0\mbf{A}^{*}+z_1\mbf{A}) & \mbf{0} \\
%-(z_0\mbf{B}^{*}+z_1\mbf{B}) & \mbf{I}
%\end{array}
%\right]^{-1}=\left[
%\begin{array}{cc}
%\big(\mbf{I}-(z_0\mbf{A}^{*}+z_1\mbf{A})\big)^{-1} & \mbf{0} \\
%(z_0\mbf{B}^{*}+z_1\mbf{B})\big(\mbf{I}-(z_0\mbf{A}^{*}+z_1\mbf{A})\big)^{-1}& \mbf{I}
%\end{array}
%\right],
%$$
%the recurrence relation (\ref{eq4.3}) can be written in the explicit form
\begin{equation} \label{eq4.4}
\left[
\begin{array}{c}
Y^{[n+1]} \\
y^{[n+1]}
\end{array}
\right]=\mbf{M}(z_0,z_1)\left[
\begin{array}{c}
Y^{[n]} \\
y^{[n]}
\end{array}
\right],
\end{equation}
where the stability matrix $\mbf{M}(z_0,z_1)$ is defined by
$$
\mbf{M}(z_0,z_1)=\left[
\begin{array}{cc}
m_{11}(z_0,z_1)  & m_{12}(z_0,z_1) \\
m_{21}(z_0,z_1) & m_{22}(z_0,z_1)
\end{array}
\right]
$$
with
$$
m_{11}(z_0,z_1)=z_0\big(\mbf{I}-(z_0\mbf{A}^{*}+z_1\mbf{A})\big)^{-1}\bar{\mbf{A}},
$$
$$
m_{12}(z_0,z_1)=\big(\mbf{I}-(z_0\mbf{A}^{*}+z_1\mbf{A})\big)^{-1}\mbf{U},
$$
$$
m_{21}(z_0,z_1)=z_0\Big(\bar{\mbf{B}}+(z_0\mbf{B}^{*}+z_1\mbf{B})
\big(\mbf{I}-(z_0\mbf{A}^{*}+z_1\mbf{A})\big)^{-1}\bar{\mbf{A}}\Big),
$$
$$
m_{22}(z_0,z_1)=\mbf{V}+(z_0\mbf{B}^{*}+z_1\mbf{B})
\big(\mbf{I}-(z_0\mbf{A}^{*}+z_1\mbf{A})\big)^{-1}\mbf{U}.
$$
We define also the stability function of the IMEX GLM (\ref{eq2.1}) as a characteristic
polynomial of the stability matrix $\mbf{M}(z_0,z_1)$, i.e.,
$$
p(w,z_0,z_1)=\det\big(w\mbf{I}-\mbf{M}(z_0,z_1)\big).
$$
For $z_0=0$ the stability matrix $\mbf{M}(0,z_1)$ and polynomial $p(w,0,z_1)=w^s\eta(w,z_1)$
%$$
%\mbf{M}(0,z_1)=\left[
%\begin{array}{cc}
%\mbf{0} & (\mbf{I}-z_1\mbf{A})^{-1}\mbf{U} \\
%\mbf{0} & \mbf{V}+z_1\mbf{B}(\mbf{I}-z_1\mbf{A})^{-1}\mbf{U}
%\end{array}
%\right]\,, \quad p(w,0,z_1)=w^s\eta(w,z_1)\,,
%$$
%where $\eta(w,z)$ is the stability function
are those of the underlying GLM (\ref{eq1.2}).
%Hence, as expected, it follows that the stability properties of the IMEX GLM (\ref{eq2.1})
%are determined in such a case by the stability matrix (\ref{eq1.7}),
%$$
%\mbf{S}(z_1)=\mbf{V}+z_1\mbf{B}(\mbf{I}-z_1\mbf{A})^{-1}\mbf{U},
%$$
%and the stability function $\eta(w,z)$ (\ref{eq1.8})
%of the underlying GLM (\ref{eq1.2}).
For $z_1=0$ we obtain $\mbf{M}(z_0,0)$,
%$$
%\mbf{M}(z_0,0)=\left[
%\begin{array}{cc}
%z_0(\mbf{I}-z_0\mbf{A}^{*})^{-1}\bar{\mbf{A}} & (\mbf{I}-z_0\mbf{A}^{*})^{-1}\mbf{U}  \\
%z_0\big(\bar{\mbf{B}}+z_0\mbf{B}^{*}(\mbf{I}-z_0\mbf{A}^{*})^{-1}\bar{\mbf{A}}\big) &
%\mbf{V}+z_0\mbf{B}^{*}(\mbf{I}-z_0\mbf{A}^{*})^{-1}\mbf{U}
%\end{array}
%\right],
%$$
and it can be verified that this corresponds to the stability matrix
%$$
%\left[
%\begin{array}{cc}
%\mbf{0} & \mbf{U} \\
%\mbf{0} & \mbf{V}
%\end{array}
%\right]+z_0\left[
%\begin{array}{cc}
%\bar{\mbf{A}} & \mbf{A}^{*} \\
%\bar{\mbf{B}} & \mbf{B}^{*}
%\end{array}
%\right]
%\left(
%\left[
%\begin{array}{cc}
%\mbf{I} & \mbf{0} \\
%\mbf{0} & \mbf{I}
%\end{array}
%\right]-z_0\left[
%\begin{array}{cc}
%\mbf{0} & \mbf{0} \\
%\bar{\mbf{A}} & \mbf{A}^{*}
%\end{array}
%\right]
%\right)^{-1}
%\left[
%\begin{array}{cc}
%\mbf{I} & \mbf{0} \\
%\mbf{0} & \mbf{U}
%\end{array}
%\right]
%$$
of the explicit method (\ref{explicit}).

We say that the IMEX GLM (\ref{eq2.1}) is stable for given $z_0,z_1\in\C$ if
all the roots $w_i(z_0,z_1)$, $i=1,2,\ldots,s+r$, of the stability function
$p(w,z_0,z_1)$ are inside of the unit circle.
As observed in \cite{hv03} in the context of IMEX $\theta$-methods
of order one,  a large region of absolute stability for the
explicit method (\ref{explicit})  and good stability properties
(for example $A$- or $L$-stability) for the
implicit method are not
sufficient to guarantee desirable stability
properties of the overall IMEX GLM (\ref{eq2.1}). We have to investigate the stability properties of the combined as IMEX GLM \cite{zhasan13,zs13}.

In this paper
we will be mainly interested in IMEX schemes which
are $A(\alpha)$- or $A$-stable with respect to the implicit part $z_1\in\C$.
To investigate such methods
we consider, similarly as in \cite{hv03,zs13}, the sets
\begin{equation} \label{eq4.5}
\mathcal{S}_{\alpha}=
\left\{ \begin{array}{lll}
z_0\in \C &:& \ \textrm{the IMEX GLM is stable for any }  \\
&&z_1\in \C: \ \mathcal{R}(z_1)<0 ~~ \textrm{and} ~~ \big|\Im(z_1)\big|\leq \tan(\alpha)\big|\mathcal{R}(z_1)\big|
\end{array} \right\}.
\end{equation}
%
%It follows from the maximum principle that $\mathcal{S}_{\alpha}$ has
%a simpler representation given by
%\begin{equation} \label{eq4.5}
%\mathcal{S}_{\alpha}=
%\left\{
%\begin{array}{lcl}
%z_0\in \C &:&  \textrm{the IMEX GLM is stable for any} \\
%& & z_1=-|y|/\tan(\alpha)+iy, \ y\in\R
%\end{array}
%\right\}.
%\end{equation}
For fixed values of $y\in \R$ we define also the sets
\begin{equation} \label{eq4.6}
\mathcal{S}_{\alpha,y}=
\left\{
\begin{array}{lcl}
z_0\in \C\!\!\!\!&:&  \textrm{the IMEX GLM is stable for fixed} \\
& & z_1=-|y|/\tan(\alpha)+iy
\end{array}
\right\}.
\end{equation}
It follows from the maximum principle that
\begin{equation} \label{eq4.7}
\mathcal{S}_{\alpha}=\bigcap_{y\in\R}\mathcal{S}_{\alpha,y}.
\end{equation}
Observe also that the region $\mathcal{S}_{\alpha,0}$ is independent of
$\alpha$, and corresponds to the region of absolute stability
of the explicit method (\ref{explicit}).
This region will be denoted by $\mathcal{S}_E$.
We have
\begin{equation} \label{eq4.8}
\mathcal{S}_{\alpha}\subset \mathcal{S}_E,
\end{equation}
and we will look for IMEX GLMs for which the stability region $\mathcal{S}_{\alpha}$ contains a
large part of the stability region $\mathcal{S}_E$ of the explicit method (\ref{explicit}),
for some $\alpha\in (0,\pi/2]$, preferably for $\alpha=\pi/2$.
%We will start our search for such IMEX schemes with the explicit formulas (\ref{explicit}) with
%sufficiently large regions $\mathcal{S}_E$.

The boundary $\partial \mathcal{S}_{\alpha,y}$ of the region $\mathcal{S}_{\alpha,y}$ can be determined by
the boundary locus method which computes the locus of the curve
$$
\partial \mathcal{S}_{\alpha,y}=\Big\{z_0\in\C: \ p\big(e^{i\theta},z_0,-|y|/\tan(\alpha)+iy\big)=0,
\ \theta\in[0,2k\pi]\Big\},
$$
where $k$ is a positive integer.

\begin{figure}[t!h!b!]
\begin{center}
\includegraphics[width=0.75\textwidth]{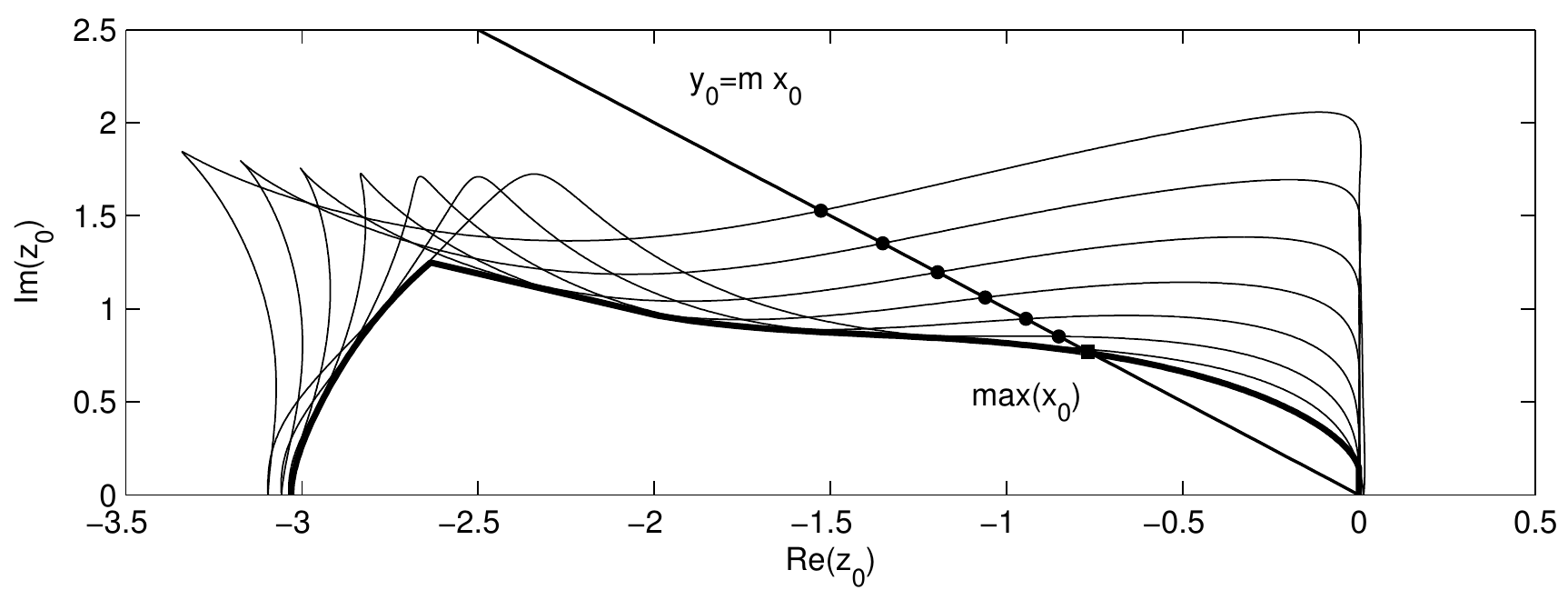}
\caption{Points on the intersection of the ray $y_0=mx_0$ and
$\partial\mathcal{S}_{\pi/2,y}$ for $y=-1.5,-1.0,\ldots,1.5$ (circles)
and on the intersection of $y_0=mx_0$ and $\partial\mathcal{S}_{\pi/2}$ (square).
This figure corresponds to IMEX GLM scheme with $p=q=r=s=2$ for $m=-1$, $\lambda=0.3$,
and $\beta_{21}=4.3$} \label{fig4.1}
\end{center}
\end{figure}

We have also developed an algorithm to determine the boundary $\partial \mathcal{S}_{\alpha}$
of the stability
region $\mathcal{S}_{\alpha}$. For fixed direction $m$ corresponding to the ray
$$
y_0=mx_0,
$$
and for fixed $z_1=-|y|/\tan(\alpha)+iy$ (or any $z_1\in\C$) we can compute the point
$z_0=x_0+iy_0$ of intersection of the boundary $\partial \mathcal{S}_{\alpha,y}$ of
$\mathcal{S}_{\alpha,y}$ with the ray $y_0=mx_0$ taking into account that such a point satisfies the
condition
$$
\max_{i=1,2,\ldots,s+r}\Big|w_i\big(z_0,-|y|/\tan(\alpha)+iy\big)\Big|=1.
$$
This can be done
by the bisection method which terminates when the
condition
\begin{equation} \label{eq4.9}
\Big|\max_{i=1,2,\ldots,s+r}\Big|w_i\big(z_0,-|y|/\tan(\alpha)+iy\big)\Big|-1\Big| \leq tol,
\end{equation}
with accuracy tolerance $tol$ is satisfied.
We apply this method to the interval $[\bar{x}_0,0]$ with $\bar{x}_0$ large enough so
that the condition (\ref{eq4.9}) is not satisfied for the first iteration of bisection method.
This process leads to the definition of the function
$$
x_0=f(m,\alpha,y),
$$
where $x_0$ corresponds to the points $z_0=x_0+iy_0\in \partial\mathcal{S}_{\alpha,y}$.
Then the boundary $\partial\mathcal{S}_{\alpha}$ of the region $\mathcal{S}_{\alpha}$
can be determined by minimizing the negative value of this function for
$m\in\R$ and plotting the resulting points $z_0=x_0+iy_0$.
This algorithm is illustrated in Fig.~\ref{fig4.1},
where we have plotted
points on the intersection of the ray $y_0=mx_0$ and
$\partial\mathcal{S}_{\pi/2,y}$ for $y=-1.5,-1.0,\ldots,1.5$ (circles)
and on the intersection of $y_0=mx_0$ and $\partial\mathcal{S}_{\pi/2}$ (square).
This figure corresponds to IMEX GLM scheme with $p=q=r=s=2$ for $m=-1$, $\lambda=0.3$,
and $\beta_{21}=4.3$.
This minimization can be
accomplished using the subroutine \texttt{fminsearch.m} in Matlab
applied to $f(m,\alpha,y)$ for fixed values of $m\in\R$ and $\alpha\in[0,\pi/2]$
starting with
appropriately chosen initial guesses for $y$.
This process will be next applied to specific IMEX GLMs.

\setcounter{equation}{0}
\setcounter{figure}{0}
\setcounter{table}{0}

%%%%%%%%%%%%%%%%%%%%%%%%%%%%%%%%%%%%%%%%%%
\section{Construction of IMEX GLMs with desirable stability properties} \label{sec:construction}
%%%%%%%%%%%%%%%%%%%%%%%%%%%%%%%%%%%%%%%%%%
In this section we describe the construction of IMEX GLMs (\ref{eq2.1}) up to the
order $p=4$ with large regions of absolute stability $\mathcal{S}$ with respect to the explicit part
assuming that the implicit part is $A$- or $L$-stable.

We will always start with implicit DIMSIM with $p=q=r=s$ and the abscissa
vector $\mbf{c}$ given in advance. After computing the coefficient
matrices $\mbf{A}$ and $\mbf{V}$ so that the resulting method has Runge-Kutta
stability with the underlying Runge-Kutta formula which is
$A$- or $L$-stable, the coefficient matrix $\mbf{B}$ is computed from the relation
\begin{equation} \label{eq5.1}
\mbf{B}=\mbf{B}_0-\mbf{A}\mbf{B}_1-\mbf{V}\mbf{B}_2+\mbf{V}\mbf{A}.
\end{equation}
Here, $\mbf{B}_0$, $\mbf{B}_1$, and $\mbf{B}_2$ are $s\times s$ matrices with
the $(i,j)$ elements
given by
$$
\ds\frac{\ds\int_0^{1+c_i}\phi_j(x)dx}{\phi_j(c_j)},
\quad
\ds\frac{\phi_j(1+c_i)}{\phi_j(c_j)},
\quad
\ds\frac{\ds\int_0^{c_i}\phi_j(x)dx}{\phi_j(c_j)},\quad
\phi_i(x)=\ds\prod_{j=1,j\neq i}^s(x-c_j),
$$
$i=1,2,\ldots,s$, compare Th.~$5.1$ in \cite{but93} or Th.~$3.2.1$ in \cite{jac09}.

\subsection{IMEX GLMs with $p=q=r=s=1$} \label{sec5.1}
Consider the implicit $\theta$-method defined by
\begin{equation} \label{eq5.2}
\left\{
\begin{array}{l}
Y^{[n+1]}=h\theta\big(f(Y^{[n+1]})+g(Y^{[n+1]})\big)+y^{[n]}, \\ [3mm]
y^{[n+1]}=h\big(f(Y^{[n+1]})+g(Y^{[n+1]})\big)+y^{[n]},
\end{array}
\right.
\end{equation}
$n=0,1,\ldots,N-1$.
This method is $A$-stable for $\theta\in[1/2,1]$ and $L$-stable for
$\theta\in(1/2,1]$. Consider also the extrapolation procedure
\begin{equation} \label{eq5.3}
f(Y^{[n+1]})=f(Y^{[n]}),
\end{equation}
$n=1,2,\ldots,N-1$.
Substituting (\ref{eq5.3}) into (\ref{eq5.2}) we obtain IMEX $\theta$-method of the form
\begin{equation} \label{eq5.4}
\left\{
\begin{array}{l}
Y^{[n+1]}=h\theta\big(f(Y^{[n]})+g(Y^{[n+1]})\big)+y^{[n]}, \\ [3mm]
y^{[n+1]}=h\big(f(Y^{[n]})+g(Y^{[n+1]})\big)+y^{[n]},
\end{array}
\right.
\end{equation}
$n=1,2,\ldots,N-1$.
We would like to point out that this variant of IMEX scheme is different
from IMEX $\theta$-method considered in \cite{hv03}.
Observe that the method (\ref{eq5.4}) requires a starting procedure
to compute $Y^{[1]}\approx y(t_0+\theta h)$ and $y^{[1]}\approx y(t_1)$.

The method (\ref{eq5.2}) can be represented by the abscissa $\mbf{c}=\theta$, the partitioned
matrix
\begin{equation*}
\left[
\begin{array}{c|c}
\mbf{A} & \mbf{U} \\
\hline
\mbf{B} & \mbf{V}
\end{array}
\right]=\left[
\begin{array}{c|c}
\theta \ & \ 1 \\
\hline
 1 \ & \ 1
\end{array}
\right],
\end{equation*}
and $\mbf{q}_0=1$, $\mbf{q}_1=0$,
and the explicit formula corresponding to $g(y)=0$ in (\ref{eq5.4}) is GLM
with $\mbf{c}=[\theta-1,\theta]^T$,
\begin{equation} \label{eq5.5}
\left[
\begin{array}{c|c}
\mbf{A} & \mbf{U} \\
\hline
\mbf{B} & \mbf{V}
\end{array}
\right]=\left[
\begin{array}{cc|cc}
0 & 0  & \ 1 & 0 \\
\theta & 0  & \ 0 & 1 \\
\hline
\theta & 0  & \ 0 & 1 \\
1 & 0   & \ 0 & 1
\end{array}
\right],
\end{equation}
and $\mbf{q}_0=[1,1]^T$, $\mbf{q}_1=[\theta-1,0]^T$.

\begin{figure}[t!h!b!]
\begin{center}
\includegraphics[width=0.75\textwidth]{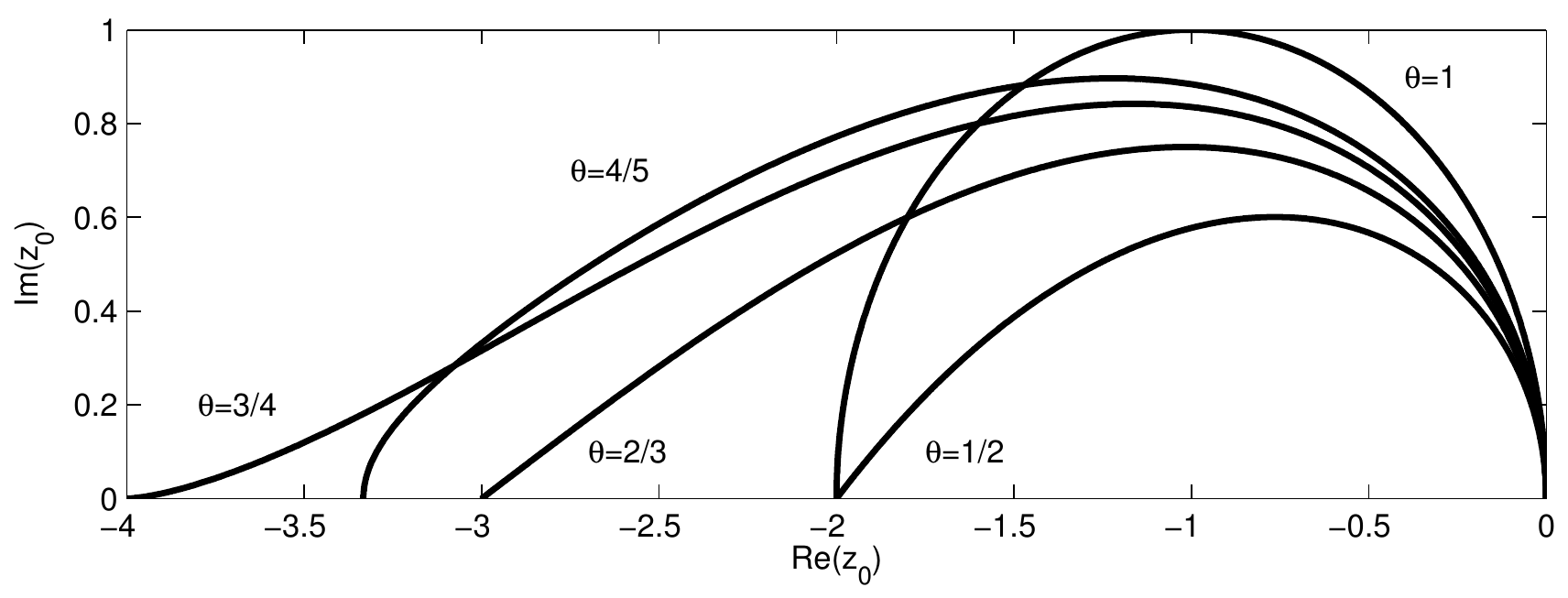}
\caption{Stability regions $\mathcal{S}_E=\mathcal{S}_E(\theta)$ of
explicit methods for $\theta=1/2$, $2/3$, $3/4$, $4/5$, and $1$} \label{fig5.1}
\end{center}
\end{figure}
\begin{figure}[t!h!b!]
\begin{center}
\includegraphics[width=0.75\textwidth]{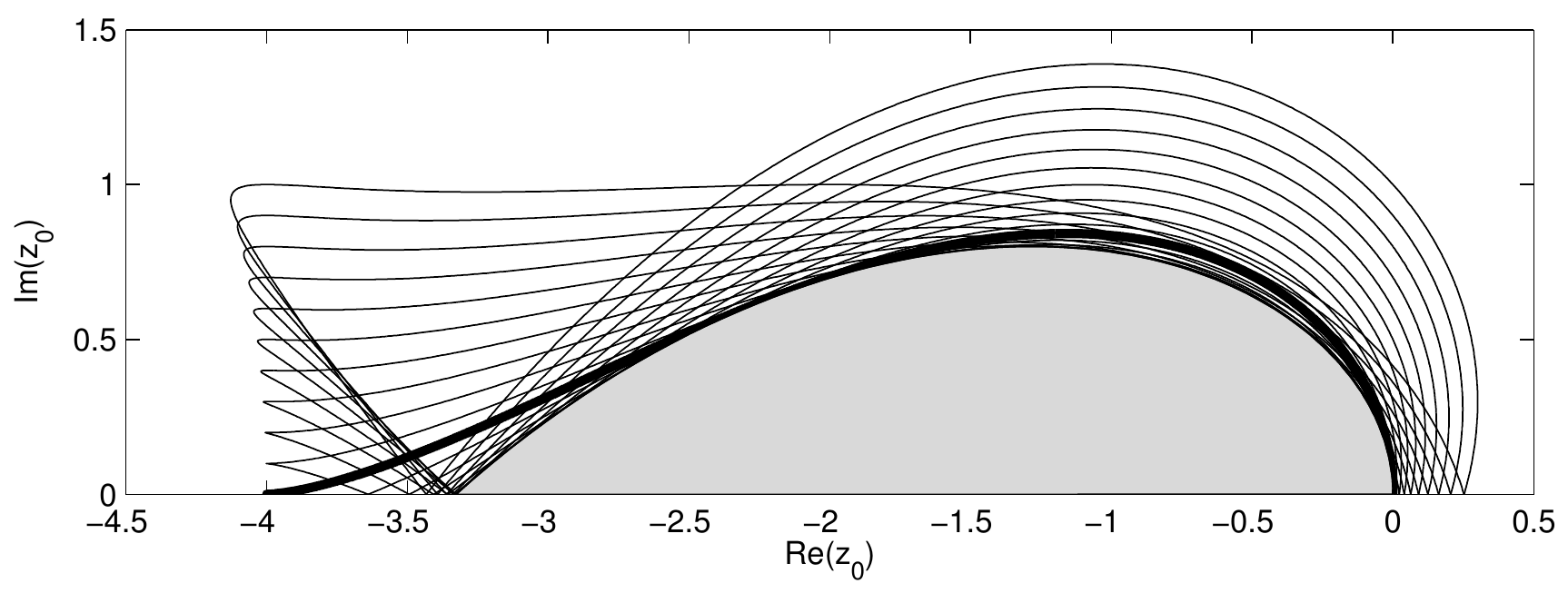}
\caption{Stability regions $\mathcal{S}_{\pi/2,y}(\theta)$, $y=-1.0,-0.9,\ldots,1.0$ (thin lines),
$\mathcal{S}_{\pi/2}(\theta)$ (shaded region), and $\mathcal{S}_E(\theta)$ (thick line)
for $\theta=3/4$} \label{fig5.2}
\end{center}
\end{figure}
\begin{figure}[t!h!b!]
\begin{center}
\includegraphics[width=0.75\textwidth]{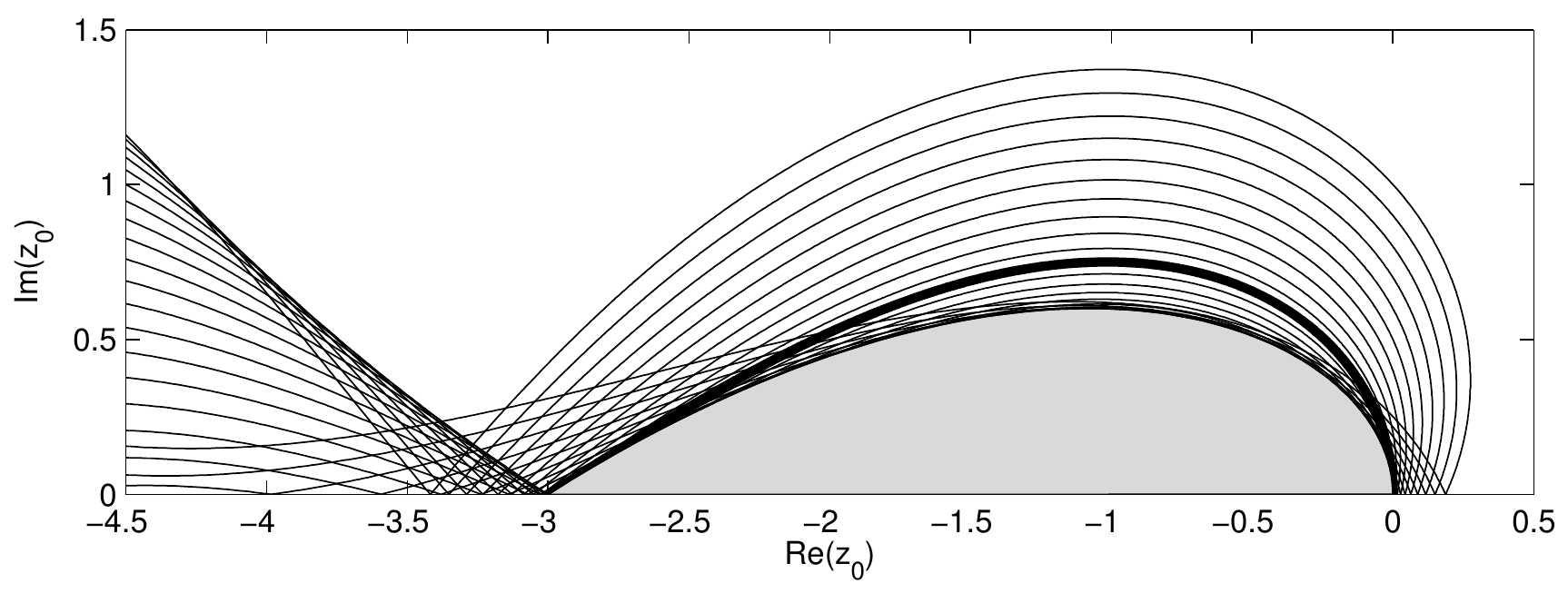}
\caption{Stability regions $\mathcal{S}_{\pi/2,y}(\theta)$, $y=-1.0,-0.9,\ldots,1.0$ (thin lines),
$\mathcal{S}_{\pi/2}(\theta)$ (shaded region), and $\mathcal{S}_E(\theta)$ (thick line)
for $\theta=2/3$} \label{fig5.3}
\end{center}
\end{figure}

It can be verified that the stability matrix of IMEX scheme (\ref{eq5.4}) takes the form
$$
\mbf{M}(z_0,z_1)=
\ds\frac{1}{1-\theta z_1}
\left[
\begin{array}{cc}
\theta z_0 & 1 \\
z_0 & 1+(1-\theta)z_1
\end{array}
\right].
$$
To investigate stability properties of (\ref{eq5.4}) it is
more convenient to work with the polynomial obtained by multiplying the
characteristic function $p(w,z_0,z_1)$ of $\mbf{M}(z_0,z_1)$ by a factor
$(1-\theta z_1)^2$. The resulting quadratic polynomial,
which will be denoted by the same symbol
$p(w,z_0,z_1)$, takes the form
$$
p(w,z_0,z_1)=(1-\theta z_1)^2w^2
-(1-\theta z_1)\big(1+\theta z_0+(1-\theta)z_1\big)w
-(1-\theta)z_0(1-\theta z_1).
$$
The stability polynomial of the explicit methods (\ref{eq5.5})
obtained by letting $g(y)=0$ in (\ref{eq5.4})
corresponds to $z_1=0$ and is given by
$$
p(w,z_0,0)=w^2-(1+\theta z_0)w-(1-\theta)z_0.
$$
The stability regions $\mathcal{S}_E=\mathcal{S}_E(\theta)$
corresponding to this polynomial are plotted in Fig~\ref{fig5.1} for
$\theta=1/2$, $2/3$, $3/4$, $4/5$ and $1$.
Observe that the stability region of the method (\ref{eq5.5}) corresponding
to $\theta=1$ is the unit disk
$$
\mathcal{S}_E=\mathcal{S}_E(1)=\big\{z_0\in\C: \ |z_0+1|<1\big\}.
$$

We will investigate next the regions $\mathcal{S}_{\pi/2}=\mathcal{S}_{\pi/2}(\theta)$
defined by (\ref{eq4.5}).
We analyze first the case $\theta=1$. It follows from Schur criterion applied
to the polynomial $p(w,z_0,iy)$ that $z_0\in\mathcal{S}_{\pi/2}(1)$ if
and only if
$$
y^2-2x_0-x_0^2-y_0^2>0
$$
for any $y\in\R$.
This is equivalent to
$$
(x_0+1)^2+y_0^2<1
\quad
\textrm{or}
\quad
|z_0+1|<1
$$
and it follows that in this case $\mathcal{S}_{\pi/2}(1)=\mathcal{S}_E(1)$.
For $\theta\in(1/2,1)$ $\mathcal{S}_{\pi/2}(\theta)$ is no longer equal to $\mathcal{S}_E(\theta)$,
but $\mathcal{S}_{\pi/2}(\theta)$ contains some part of $\mathcal{S}_E(\theta)$. This is
illustrated in Fig.~\ref{fig5.2} for $\theta=3/4$ and in Fig.~\ref{fig5.3}
for $\theta=2/3$.
In these figures we have plotted the regions $\mathcal{S}_{\pi/2,y}=\mathcal{S}_{\pi/2,y}(\theta)$
defined by (\ref{eq4.6}) for
$y=-1.0,-0.9,\ldots,1.0$ (thin lines), the regions $\mathcal{S}_{\pi/2}=\mathcal{S}_{\pi/2}(\theta)$
defined by (\ref{eq4.5}) (shaded regions), and the regions
$\mathcal{S}_E=\mathcal{S}_E(\theta)$ corresponding to explicit methods
(\ref{eq5.5}) for $\theta=3/4$ and $\theta=2/3$. We can see that for these values of
$\theta$ the sets $\mathcal{S}_{\pi/2}(\theta)$ contain quite large parts
of $\mathcal{S}_E(\theta)$.
These figures illustrate also
the relation (\ref{eq4.7}) and the inclusion (\ref{eq4.8}).
For $\theta=1/2$ it follows from Schur criterion that
$z_0\in\mathcal{S}_{\pi/2}(1/2)$ if and only if
$$
x_0^2+y_0^2<4
\quad
\textrm{and}
\quad
x_0y^2-4yy_0-x_0(x_0+2)^2-y_0^2(4+x_0)>0
$$
for any $y\in\R$.
This implies that
$$
x_0>0
\quad
\textrm{and}
\quad
(2+x_0)^2(x_0^2+y_0^2)<0
$$
and it follows that $\mathcal{S}_{\pi/2}(1/2)$ is empty.

\subsection{IMEX GLMs with $p=q=r=s=2$} \label{sec5.2}
Consider the implicit DIMSIM with $\mbf{c}=[0,1]^T$, the coefficient
matrices given by
$$
\left[
\begin{array}{c|c}
\mbf{A} & \mbf{U} \\
\hline
\mbf{B} & \mbf{V}
\end{array}
\right]=\left[
\begin{array}{cc|cc}
\lambda & 0 & 1 & 0 \\
\frac{2}{1+2\lambda} & \lambda & 0 & 1 \\
\hline
\frac{8\lambda^3+12\lambda^2-2\lambda+5}{4(2\lambda+1)} & \frac{1-4\lambda^2}{4} &
\frac{1}{2}+\lambda & \frac{1}{2}-\lambda \\
\frac{8\lambda^3+20\lambda^2-2\lambda+3}{4(2\lambda+1)} &
\frac{-8\lambda^3-12\lambda^2+10\lambda-1}{4(2\lambda+1)} &
\frac{1}{2}+\lambda & \frac{1}{2}-\lambda
\end{array}
\right],
$$
and the vectors $\mbf{q}_0$, $\mbf{q}_1$, and $\mbf{q}_2$ equal to
$$
\mbf{q}_0=\left[
\begin{array}{c}
1 \\
1
\end{array}
\right],
\quad
\mbf{q}_1=\left[
\begin{array}{c}
-\lambda \\
\frac{-2\lambda^2+\lambda-1}{2\lambda+1}
\end{array}
\right],
\quad
\mbf{q}_2=\left[
\begin{array}{c}
0 \\
\frac{1-2\lambda}{2}
\end{array}
\right].
$$
It was demonstrated in \cite{jac09} that this method has order $p=2$ and
stage order $q=2$. Moreover, this method is $A$-stable if $\lambda\geq 1/4$
and $L$-stable for $\lambda=(2\pm \sqrt{2})/2$.

The coefficients $\alpha_{jk}$ of the extrapolation formula (\ref{eq1.11})
of order $p=2$ computed from the system (\ref{eq2.12}) corresponding to
$p=s=2$ take the form
$$
\alpha=\left[
\begin{array}{cc}
\alpha_{11} & \alpha_{12} \\
\alpha_{21} & \alpha_{22}
\end{array}
\right]=\left[
\begin{array}{cc}
\phantom{-}0 & 1 \\
-1 & 2-\beta_{21}
\end{array}
\right],
$$
and the matrices $\bar{\mbf{A}}$, $\mbf{A}^{*}$,  $\bar{\mbf{B}}$, and $\mbf{B}^{*}$
appearing in the representation of IMEX GLM (\ref{eq2.1}), and the corresponding
explicit scheme (\ref{extrap-imex}) or (\ref{explicit}), are given by
$$
\bar{\mbf{A}}=\left[
\begin{array}{cc}
 0 & \lambda \\
 -\lambda & \frac{2+(2-\beta_{21})\lambda+2(2-\beta_{21})\lambda^2}{1+2\lambda}
\end{array}
\right],
\quad
\mbf{A}^{*}=\left[
\begin{array}{cc}
 0 & 0 \\
 \beta_{21}\lambda & 0
\end{array}
\right],
$$
$$
\bar{\mbf{B}}=\left[
\begin{array}{cc}
\frac{4\lambda^2-1}{4} &
\frac{7-\beta_{21}+2(1-\beta_{21})\lambda+4(1+\beta_{21})\lambda^2-8(1-\beta_{21})\lambda^3}{4(1+2\lambda)} \\
\frac{1-10\lambda+12\lambda^2+8\lambda^3}{4(1+2\lambda)} &
\frac{1+\beta_{21}+2(9-5\beta_{21})\lambda-4(1-3\beta_{21})\lambda^2-8(1-\beta_{21})\lambda^3}{4(1+2\lambda)}
\end{array}
\right],
$$
$$
\mbf{B}^{*}=\left[
\begin{array}{cc}
\frac{\beta_{21}(1-4\lambda^2)}{4} & 0 \\
\frac{-\beta_{21}(1-10\lambda+12\lambda^2+8\lambda^3)}{4(1+2\lambda)} & 0
\end{array}
\right].
$$

\begin{figure}[t!h!b!]
\begin{center}
\includegraphics[width=0.75\textwidth]{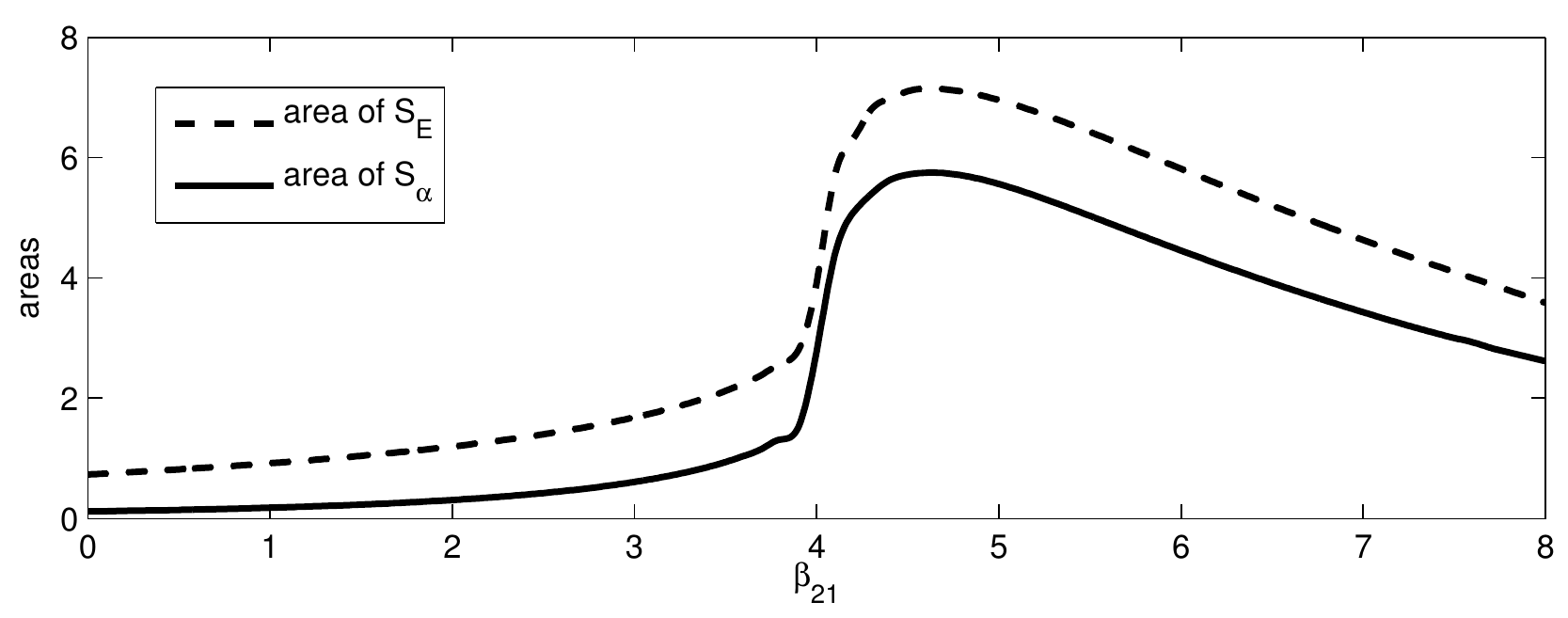}
\caption{Areas of stability regions $\mathcal{S}_E=\mathcal{S}_E(\beta_{21})$ and
$\mathcal{S}_{\alpha}=\mathcal{S}_{\alpha}(\beta_{21})$ for
$\alpha=\pi/2$ and $\beta_{21}\in [0,8]$} \label{fig5.4}
\end{center}
\end{figure}
\begin{figure}[t!h!b!]
\begin{center}
\includegraphics[width=0.75\textwidth]{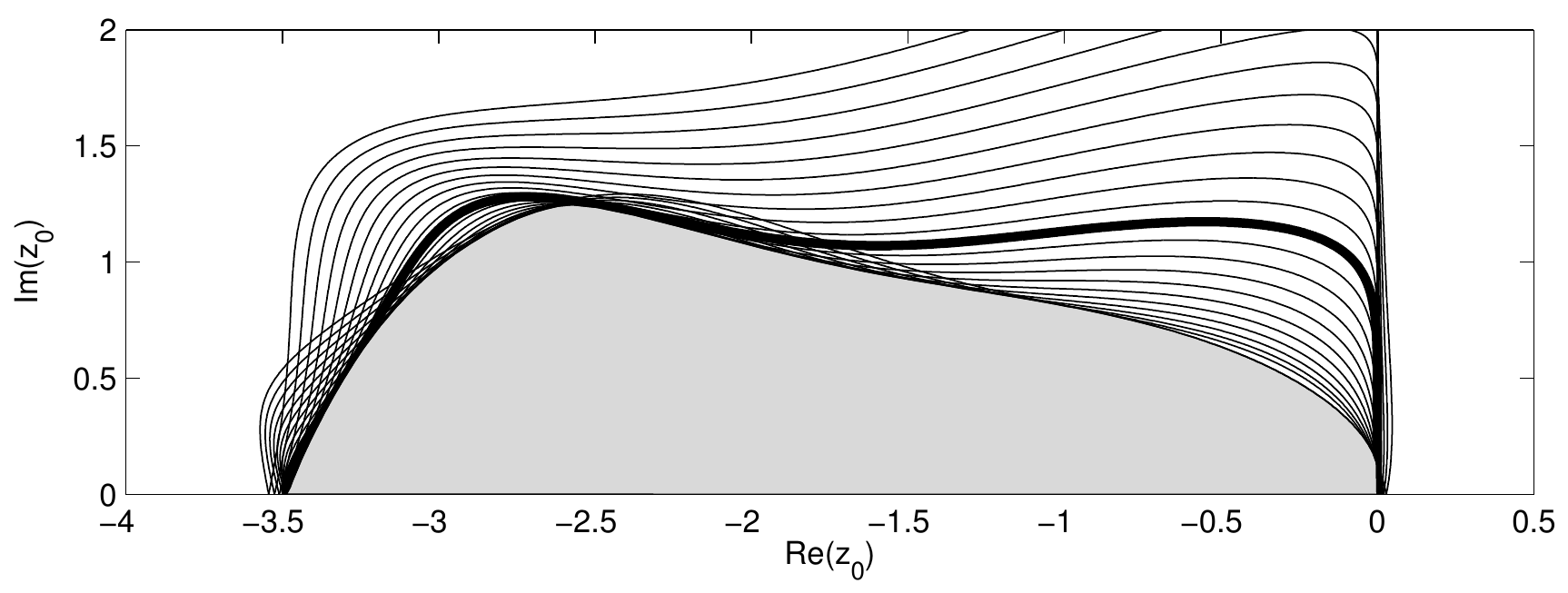}
\caption{Stability regions $\mathcal{S}_{\pi/2,y}$, $y=-2.0,-1.8,\ldots,2.0$ (thin lines),
$\mathcal{S}_{\pi/2}$ (shaded region), and $\mathcal{S}_E$ (thick line)
for $\lambda=(2-\sqrt{2})/2$ and $\beta_{21}\approx 4.64$} \label{fig5.5}
\end{center}
\end{figure}
\begin{figure}[t!h!b!]
\begin{center}
\includegraphics[width=0.75\textwidth]{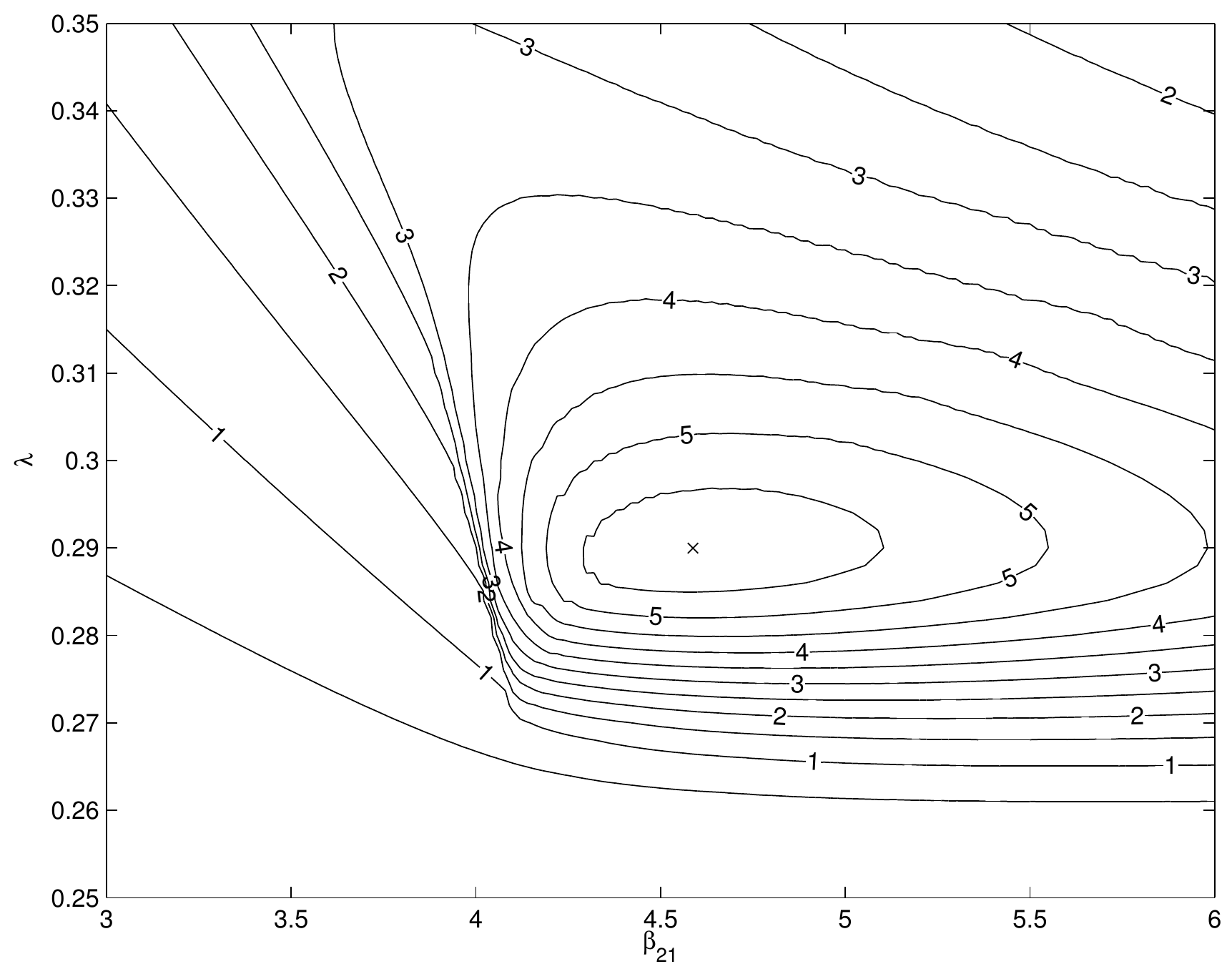}
\caption{Contour plots of the area of stability region
$\mathcal{S}_{\pi/2}$ of IMEX GLMs for $p=q=r=s=2$} \label{fig5.6}
\end{center}
\end{figure}
\begin{figure}[t!h!b!]
\begin{center}
\includegraphics[width=0.75\textwidth]{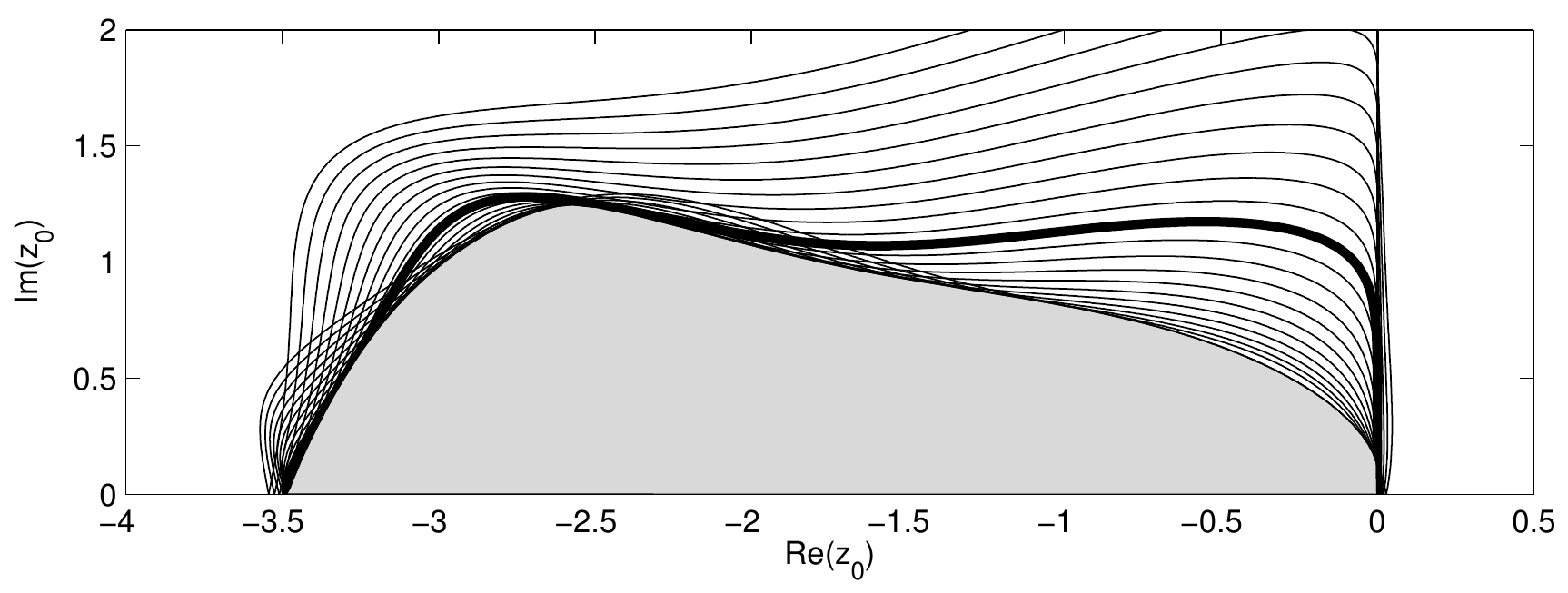}
\caption{Stability regions $\mathcal{S}_{\pi/2,y}$, $y=-2.0,-1.8,\ldots,2.0$ (thin lines),
$\mathcal{S}_{\pi/2}$ (shaded region), and $\mathcal{S}_E$ (thick line)
for $\lambda \approx 0.29$ and $\beta_{21}\approx 4.64$} \label{fig5.7}
\end{center}
\end{figure}

To investigate the stability properties of the resulting IMEX scheme
we will work with the stability polynomial
$p(w,z_0,z_1)$ obtained by multiplying stability function of
the method by a factor $(1-\lambda z_1)^2$. It can be verified that
this polynomial takes the form
$$
p(w,z_0,z_1)=
w\big((1-\lambda z_1)^2w^3-p_3(z_0,z_1)w^2
+p_2(z_0,z_1)w-p_1(z_0,z_1)\big),
$$
with the coefficients $p_1(z_0,z_1)$, $p_2(z_0,z_1)$, and $p_3(z_0,z_1)$ which
depend also on $\lambda$ and $\beta_{21}$. These coefficients are given by
$$
\begin{array}{lcl}
p_1(z_0,z_1) \ = \ \ds\frac{4-\beta_{21}-2(4-\beta_{21})\lambda+4\beta_{21}\lambda^2-8\beta_{21}\lambda^3}
{4(1+2\lambda)}z_0
+\ds\frac{1-4\lambda+2\lambda^2}{2}z_0^2,
\end{array}
$$
$$
\begin{array}{lcl}
p_2(z_0,z_1) & = & \ds\frac{2(1-\lambda+(2-\beta_{21})\lambda^2-2\beta_{21}\lambda^3)}
{1+2\lambda}z_0
+\ds\frac{\beta_{21}-4\beta_{21}\lambda+2(1+\beta_{21})\lambda^2}
{2}z_0^2 \\ [5mm]
& - &
\ds\frac{2-\beta_{21}-4(2-\beta_{21})\lambda+2(2-\beta_{21})\lambda^2}{2}z_0z_1,
\end{array}
$$
$$
\begin{array}{lcl}
p_3(z_0,z_1) & = &
1+\ds\frac{8+\beta_{21}+2(4-\beta_{21})\lambda+4(4-3\beta_{21})\lambda^2-8\beta_{21}\lambda^3}
{4(1+2\lambda)}z_0  \\ [5mm]
& + &
\beta_{21}\lambda^2z_0^2-(2-\beta_{21})z_0z_1+(1-2\lambda)z_1
+\ds\frac{1-4\lambda+2\lambda^2}{2}z_1^2.
\end{array}
$$

The underlying implicit GLM is $A$- and $L$-stable for
$\lambda=(2\pm\sqrt{2})/2$, compare \cite{jac09}, and we choose $\lambda=(2-\sqrt{2})/2$
since this value leads to explicit methods and IMEX schemes with larger regions of stability
$ \mathcal{S}_E$ and $\mathcal{S}_{\pi/2}$ than those corresponding to $\lambda=(2+\sqrt{2})/2$.
We have plotted in Fig~\ref{fig5.4} the area of the stability region
$\mathcal{S}_E=\mathcal{S}_E(\beta_{21})$
of the explicit method (corresponding to $z_1=0$)
and the area of the stability region $\mathcal{S}_{\pi/2}=\mathcal{S}_{\pi/2}(\beta_{21})$
of the IMEX scheme for $\beta_{21}\in[0,8]$.
It can be verified that the explicit formula attains the maximal area of
$\mathcal{S}_E$, approximately
equal to $7.15$ for $\beta_{21}\approx 4.56$, and the IMEX scheme attains the
maximal area of $\mathcal{S}_{\pi/2}$, approximately equal to $5.75$ for
$\beta_{21}\approx 4.64$.

On Fig~\ref{fig5.5} we have plotted stability regions $\mathcal{S}_{\pi/2,y}$ for
$y=-2.0,-1.8,\ldots,2.0$ (thin lines), stability region $\mathcal{S}_{\pi/2}$ (shaded region),
and stability region $\mathcal{S}_E$ (thick line). We can see that $\mathcal{S}_{\pi/2}$ contains
a significant part of $\mathcal{S}_E$.

We have also displayed on Fig.~\ref{fig5.6} contour plots of the area of stability region
$\mathcal{S}_{\pi/2}$ of IMEX methods for $\beta_{21}\in[3,6]$ and $\lambda\in[0.25,0.35]$.
This area attains its maximum value approximately equal to $5.83$ for
$\beta_{21}\approx 4.59$ and $\lambda\approx 0.29$.
This point is marked by the symbol `$\times$' on Fig.~\ref{fig5.6}.
On Fig~\ref{fig5.7} we have plotted stability regions $\mathcal{S}_{\pi/2,y}$ for
$y=-2.0,-1.8,\ldots,2.0$ (thin lines), stability region $\mathcal{S}_{\pi/2}$ (shaded region),
and stability region $\mathcal{S}_E$ (thick line) corresponding to these
values of $\beta_{21}$ and $\lambda$. We can see again that $\mathcal{S}_{\pi/2}$ contains
a significant part of $\mathcal{S}_E$.

\subsection{IMEX GLMs with $p=q=r=s=3$} \label{sec5.3}

Let $\lambda\approx 0.43586652$ be a root of the cubic polynomial
$$
\varphi(\lambda)=\lambda^3-3\lambda^2+\ds\frac{3}{2}\lambda-\ds\frac{1}{6},
$$
and consider the implicit DIMSIM with $\mbf{c}=[0,1/2,1]^T$, the coefficient
matrix $\mbf{A}$ given by
$$
\mbf{A}=\left[
\begin{array}{ccc}
\phantom{-}0.43586652 & 0 & 0 \\
\phantom{-}0.25051488 & 0.43586652 & 0 \\
-1.2115943 & 1.0012746 & 0.43586652
\end{array}
\right],
$$
the rank one coefficient matrix $\mbf{V}=\mbf{e}\mbf{v}^T$,
where $\mbf{e}=[1,1,1]^T$ and
$$
\mbf{v}=\left[
\begin{array}{ccc}
0.55209096 & 0.73485666 & -0.28694762
\end{array}
\right]^T,
$$
and the vectors $\mbf{q}_0$, $\mbf{q}_1$, $\mbf{q}_2$, and $\mbf{q}_3$
equal to $\mbf{q}_0=\mbf{e}$,
$$
\mbf{q}_1=\left[
\begin{array}{ccc}
-0.43586652 &
-0.18638140 &
0.77445315
\end{array}
\right]^T,
$$
$$
\mbf{q}_2=\left[
\begin{array}{ccc}
0 &
-0.092933261&
-0.43650382
\end{array}
\right]^T,
$$
$$
\mbf{q}_3=\left[
\begin{array}{ccc}
0 &
-0.033649982 &
-0.17642592
\end{array}
\right]^T.
$$
Computing the coefficient matrix $\mbf{B}$ from the relation
(\ref{eq5.1}) leads to the method of order $p=3$ and stage order $q=3$.
It was demonstrated in \cite{jac09} that the resulting method is
$A$- and $L$-stable.

The coefficients $\alpha_{jk}$ of the extrapolation formula (\ref{eq1.11})
of order $p=3$ computed from the system (\ref{eq2.12}) corresponding to
$p=s=3$ take the form
$$
\alpha=\left[
\begin{array}{ccc}
\alpha_{11} & \alpha_{12} & \alpha_{13} \\
\alpha_{21} & \alpha_{22} & \alpha_{23} \\
\alpha_{31} & \alpha_{32} & \alpha_{33}
\end{array}
\right]=\left[
\begin{array}{ccc}
0 & \phantom{-}0 & 1 \\
1 & -3 & 3-\beta_{21} \\
3-\beta_{32} & 3\beta_{32}-8 & 6-\beta_{31}-3\beta_{32}
\end{array}
\right].
$$

To investigate the stability properties of the resulting IMEX scheme
we will work with the stability polynomial
$p(w,z_0,z_1)$ obtained by multiplying stability function of
the method by a factor $(1-\lambda z_1)^3$,
where $\lambda$ is the diagonal element of the matrix $\mbf{A}$.
It can be verified that
this polynomial takes the form
$$
\begin{array}{lcl}
p(w,z_0,z_1) & = &
(1-\lambda z_1)^3w^6-p_5(z_0,z_1)w^5
+p_4(z_0,z_1)w^4-p_3(z_0,z_1)w^3 \\ [2mm]
& + &
p_2(z_0,z_1)w^2-p_1(z_0,z_1)w+p_0(z_0,z_1),
\end{array}
$$
with the coefficients $p_0(z_0,z_1)$, $p_1(z_0,z_1)$, $p_2(z_0,z_1)$,
$p_3(z_0,z_1)$, $p_4(z_0,z_1)$, and $p_5(z_0,z_1)$ which
are polynomials of degree less than or equal to $3$ with respect to
$z_0$ and $z_1$. These coefficients
depend also on $\beta_{21}$, $\beta_{31}$, and $\beta_{32}$.

\begin{figure}[t!h!b!]
\begin{center}
\includegraphics[width=0.75\textwidth]{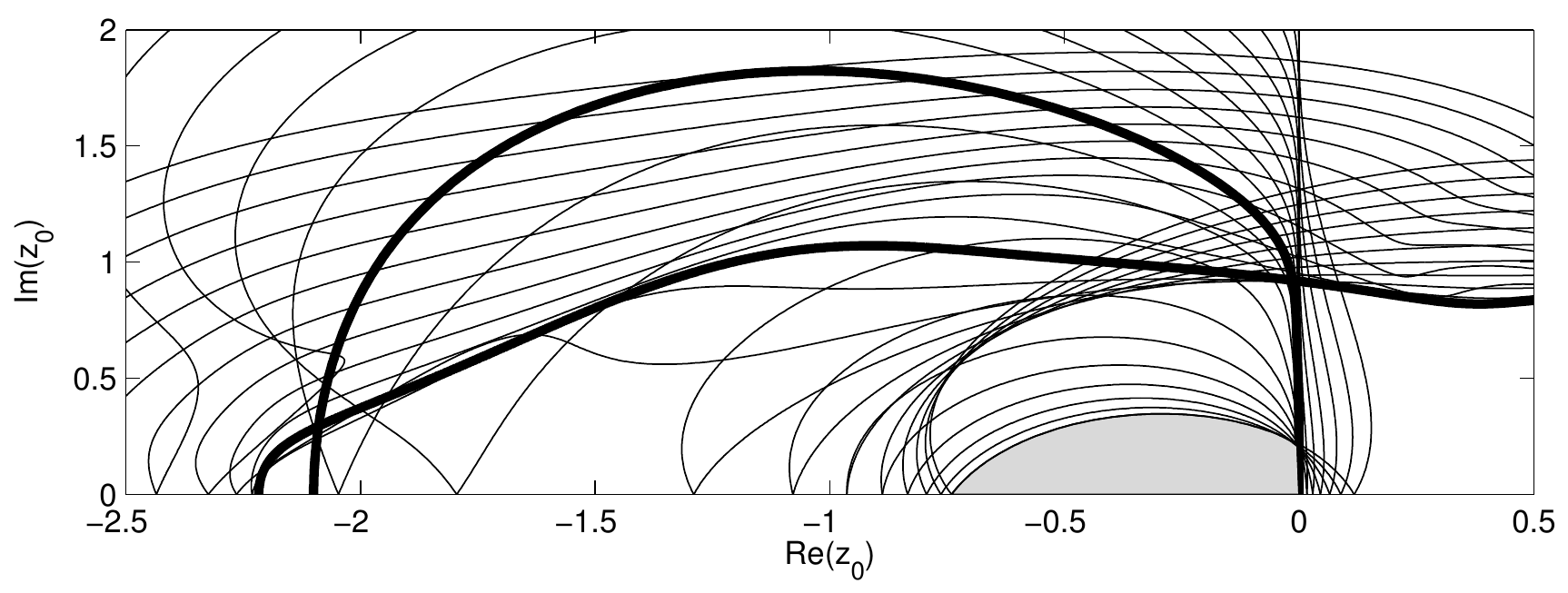}
\caption{Stability regions $\mathcal{S}_{\pi/2,y}$, $y=-2.0,-1.8,\ldots,2.0$ (thin lines),
$\mathcal{S}_{\pi/2}$ (shaded region), and $\mathcal{S}_E$ (thick line)
for $\beta_{21}\approx 1.13$, $\beta_{31}\approx 1.45$, and $\beta_{32}\approx -0.158$} \label{fig5.8}
\end{center}
\end{figure}
\begin{figure}[t!h!b!]
\begin{center}
\includegraphics[width=0.75\textwidth]{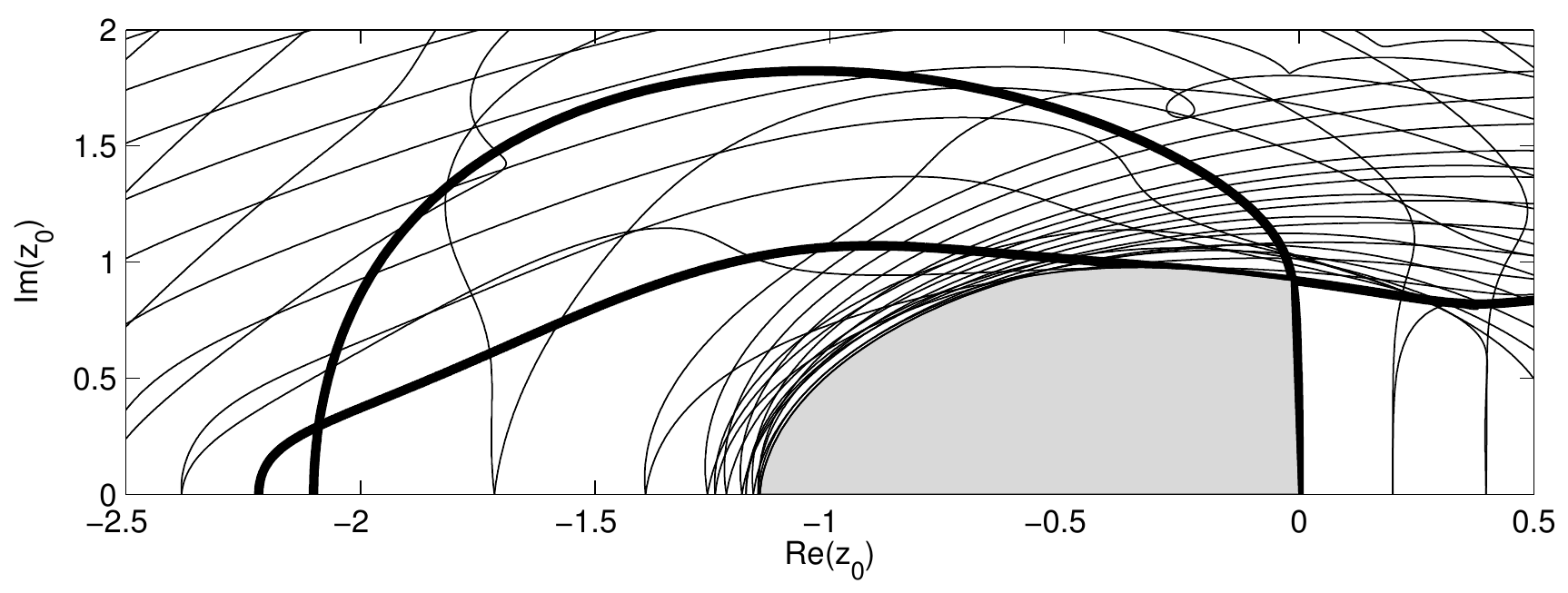}
\caption{Stability regions $\mathcal{S}_{\pi/4,y}$, $y=-2.0,-1.8,\ldots,2.0$ (thin lines),
$\mathcal{S}_{\pi/4}$ (shaded region), and $\mathcal{S}_E$ (thick line)
for $\beta_{21}\approx 1.13$, $\beta_{31}\approx 1.45$, and $\beta_{32}\approx -0.158$} \label{fig5.9}
\end{center}
\end{figure}
\begin{figure}[t!h!b!]
\begin{center}
\includegraphics[width=0.75\textwidth]{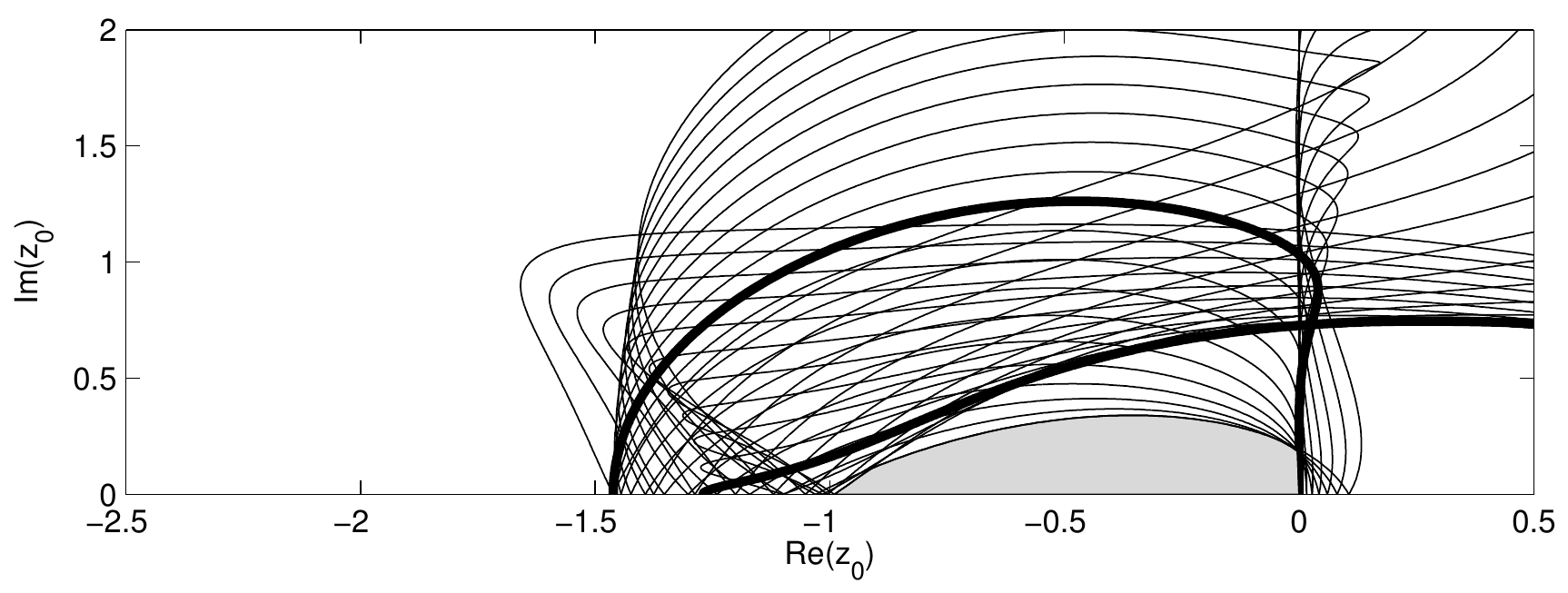}
\caption{Stability regions $\mathcal{S}_{\pi/2,y}$, $y=-2.0,-1.8,\ldots,2.0$ (thin lines),
$\mathcal{S}_{\pi/2}$ (shaded region), and $\mathcal{S}_E$ (thick line)
for $\beta_{21}\approx 1.39$, $\beta_{31}\approx -0.146$, and $\beta_{32}\approx 1.24$} \label{fig5.10}
\end{center}
\end{figure}
\begin{figure}[t!h!b!]
\begin{center}
\includegraphics[width=0.75\textwidth]{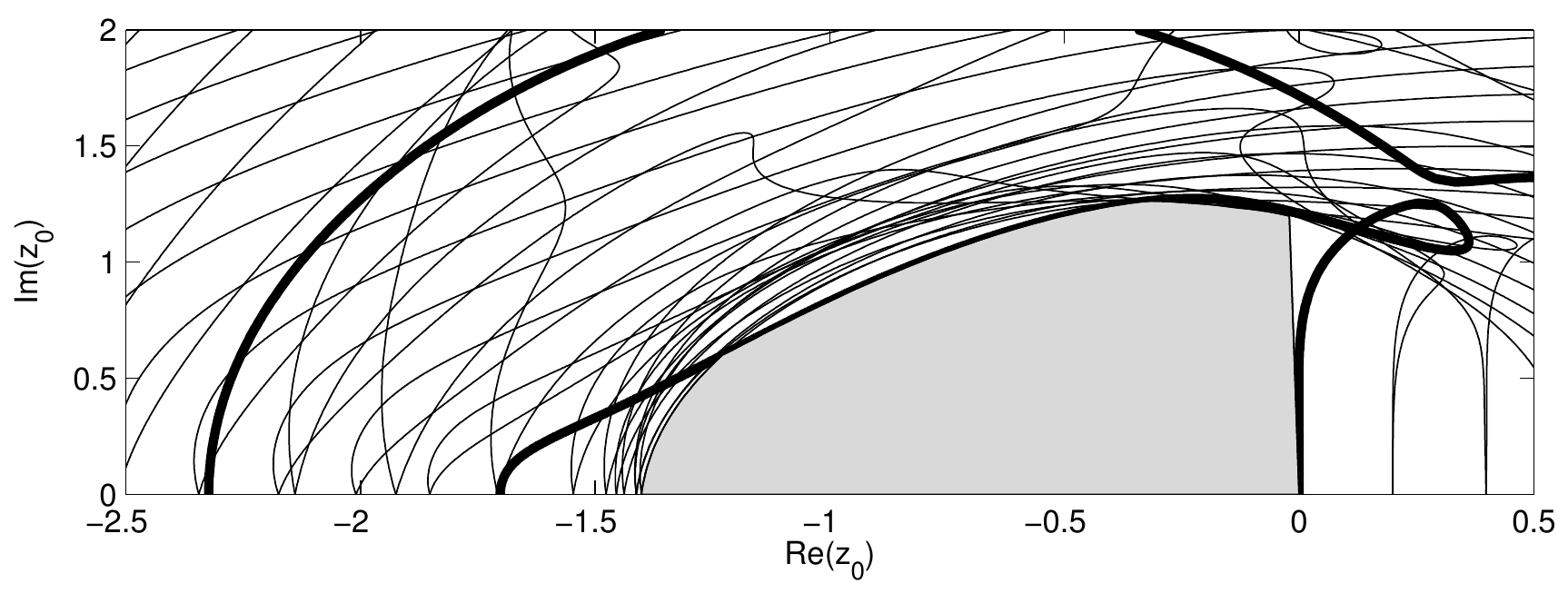}
\caption{Stability regions $\mathcal{S}_{\pi/4,y}$, $y=-2.0,-1.8,\ldots,2.0$ (thin lines),
$\mathcal{S}_{\pi/4}$ (shaded region), and $\mathcal{S}_E$ (thick line)
for $\beta_{21}\approx 1.25$, $\beta_{31}\approx 1.62$, and $\beta_{32}\approx 0.0555$} \label{fig5.11}
\end{center}
\end{figure}

We have performed a computer search in the parameter space
$\beta_{21}$, $\beta_{31}$, and $\beta_{32}$ looking first for methods
for which the stability region $\mathcal{S}_E$ of the explicit method
is maximal. This corresponds to the parameter values
$
\beta_{21}\approx 1.13
$,
$
\beta_{31}\approx 1.45
$,
$
\beta_{32}\approx -0.158
$,
for which the area of $\mathcal{S}_E$ is approximately equal to $3.54$.
The stability region $\mathcal{S}_E$ of the resulting method is plotted
on Fig.~\ref{fig5.8} by a thick line. We have also plotted stability regions
$\mathcal{S}_{\pi/2,y}$ for $y=-2.0,-1.8,\ldots,2.0$ (thin lines) and the stability
region $\mathcal{S}_{\pi/2}$ (shaded region) of the corresponding IMEX scheme.
We can see that this region $\mathcal{S}_{\pi/2}$ is substantially smaller
than the region $\mathcal{S}_E$, the area of $\mathcal{S}_{\pi/2}$ is approximately
equal to $0.39$.
However, we can obtain larger regions $\mathcal{S}_{\alpha}$ for values of
$\alpha$ smaller than $\pi/2$, i.e., if we relax the requirement that the
implicit part of IMEX scheme is $A$-stable and require instead
$A(\alpha)$-stability for $\alpha<\pi/2$. This is illustrated on
Fig.~\ref{fig5.9}, where we have plotted again the stability region
$\mathcal{S}_E$ of the explicit method (thick line), stability regions
of $\mathcal{S}_{\alpha,y}$ for $y=-2.0,-1.8,\ldots,2.0$ (thin lines),
and stability region $\mathcal{S}_{\alpha}$ (shaded region) for $\alpha=\pi/4$.
The area of this region is approximately equal to $1.91$.

We have also performed a computer search looking for methods for
which stability regions $\mathcal{S}_{\alpha}$ are maximal for some
fixed values of $\alpha$. For $\alpha=\pi/2$ this corresponds to
the parameter values $\beta_{21}\approx 1.39$, $\beta_{31}\approx -0.146$,
and $\beta_{32}\approx 1.24$ for which the area of $\mathcal{S}_{\pi/2}$
is approximately equal to $0.50$.
For $\alpha=\pi/4$ this corresponds to
the parameter values $\beta_{21}\approx 1.25$, $\beta_{31}\approx 1.62$,
and $\beta_{32}\approx 0.00555$ for which the area of $\mathcal{S}_{\pi/4}$
is approximately equal to $2.80$.
We have plotted on Fig.~\ref{fig5.10} and Fig.~\ref{fig5.11}
the stability regions $\mathcal{S}_E$ of the resulting explicit
methods (thick lines), the regions $\mathcal{S}_{\alpha,y}$ for
$y=-2.0,-1.8,\ldots,2.0$ (thin lines) and stability regions $\mathcal{S}_{\alpha}$
of IMEX schemes (shaded regions) for $\alpha=\pi/2$ and $\alpha=\pi/4$.

\subsection{IMEX GLMs with $p=q=r=s=4$} \label{sec5.4}
Let $\lambda\approx 0.57281606$ be a root of the polynomial
$$
\varphi(\lambda)=\lambda^4-4\lambda^3+3\lambda^2-\ds\frac{2}{3}\lambda+\ds\frac{1}{24},
$$
and consider the implicit DIMSIM with $\mbf{c}=[0,1/3,2/3,1]^T$, the coefficient
matrix $\mbf{A}$ given by
$$
\mbf{A}=\left[
\begin{array}{cccc}
0.57281606 & \phantom{-}0 & 0 & 0 \\
0.15022075 & \phantom{-}0.57281606 & 0 & 0 \\
0.59515808 & -0.26632807 & 0.57281606 & 0 \\
1.7717286  & -1.64234444 & 0.39147320 & 0.57281606
\end{array}
\right],
$$
the rank one coefficient matrix $\mbf{V}=\mbf{e}\mbf{v}^T$,
where $\mbf{e}=[1,1,1,1]^T$ and
$$
\mbf{v}=\left[
\begin{array}{cccc}
15.615037 & -46.967269 & 41.290082 & -8.9378502
\end{array}
\right]^T,
$$
and the vectors $\mbf{q}_0$, $\mbf{q}_1$, $\mbf{q}_2$, $\mbf{q}_3$, and $\mbf{q}_4$
equal to $\mbf{q}_0=\mbf{e}$,
$$
\mbf{q}_1=\left[
\begin{array}{cccc}
-0.57281606 &
-0.38970348 &
-0.23497940 &
-0.093673420
\end{array}
\right]^T,
$$
$$
\mbf{q}_2=\left[
\begin{array}{cccc}
0 &
-0.13538313 &
-0.070879128 &
0.21364995
\end{array}
\right]^T,
$$
$$
\mbf{q}_3=\left[
\begin{array}{cccc}
0 &
-0.025650275 &
-0.063113738 &
-0.11549405
\end{array}
\right]^T,
$$
$$
\mbf{q}_4=\left[
\begin{array}{cccc}
0 &
-0.0030214983 &
-0.018412760 &
-0.062996758
\end{array}
\right]^T.
$$
Computing the coefficient matrix $\mbf{B}$ from the relation
(\ref{eq5.1}) leads to the method of order $p=4$ and stage order $q=4$.
It was demonstrated in \cite{wri01} that the resulting method is
$A$- and $L$-stable.

\begin{figure}[t!h!b!]
\begin{center}
\includegraphics[width=0.75\textwidth]{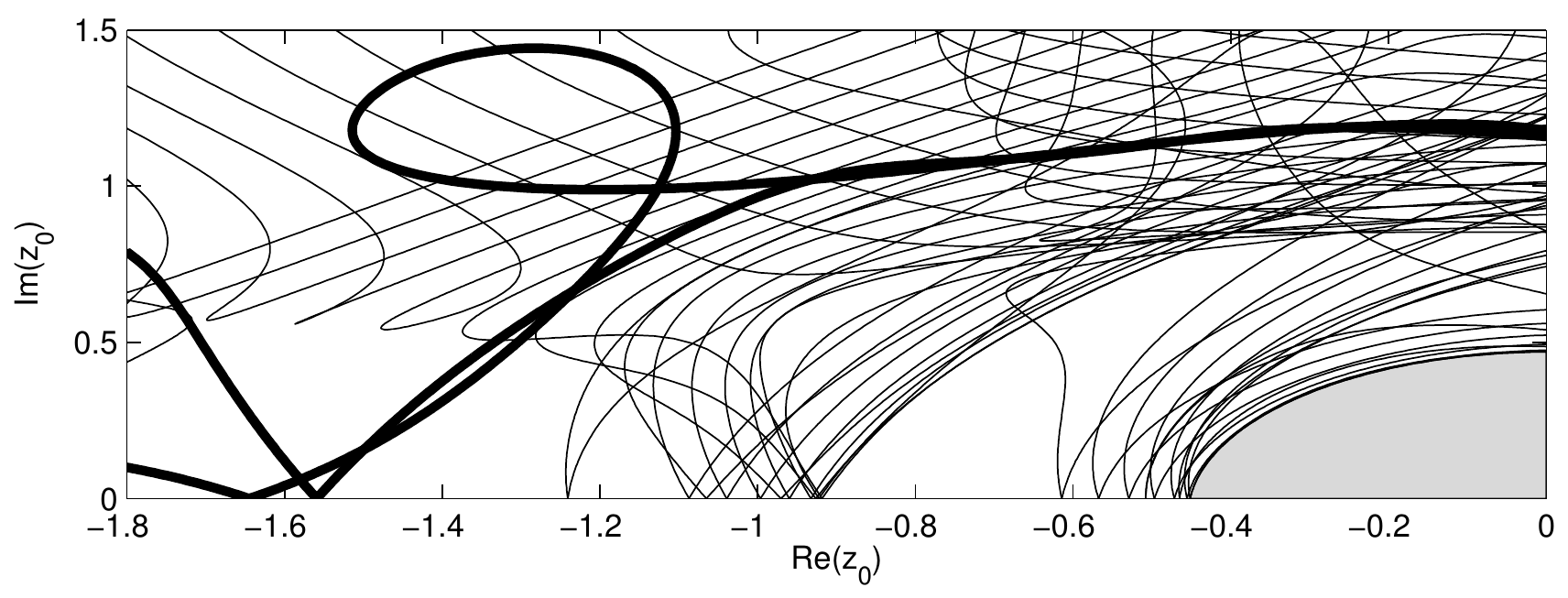}
\caption{Stability regions $\mathcal{S}_{\pi/4,y}$, $y=-2.0,-1.8,\ldots,2.0$ (thin lines),
$\mathcal{S}_{\pi/4}$ (shaded region), and $\mathcal{S}_E$ (thick line)
for $\beta_{21}\approx 0.0645$, $\beta_{31}\approx -0.351$, and $\beta_{32}\approx 0.272$,
$\beta_{41}\approx -2.82$, $\beta_{42}\approx 3.47$, $\beta_{43}\approx -1.05$} \label{fig5.12}
\end{center}
\end{figure}
\begin{figure}[t!h!b!]
\begin{center}
\includegraphics[width=0.75\textwidth]{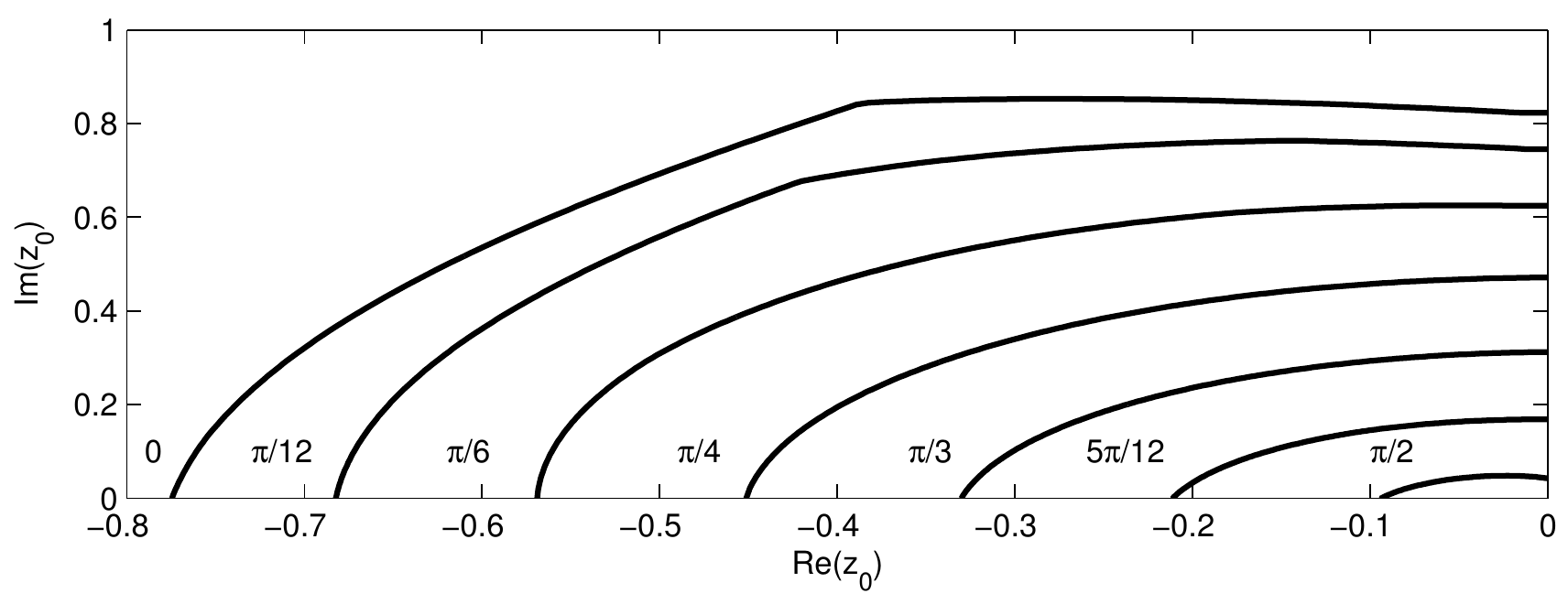}
\caption{Stability regions $\mathcal{S}_{\alpha}$ for $\alpha=0$, $\pi/12$, $\pi/6$,
$\pi/4$, $\pi/3$, $5\pi/12$, and $\pi/2$
for $\beta_{21}\approx 0.0645$, $\beta_{31}\approx -0.351$, and $\beta_{32}\approx 0.272$,
$\beta_{41}\approx -2.82$, $\beta_{42}\approx 3.47$, $\beta_{43}\approx -1.05$} \label{fig5.13}
\end{center}
\end{figure}
\begin{figure}[t!h!b!]
\begin{center}
\includegraphics[width=0.75\textwidth]{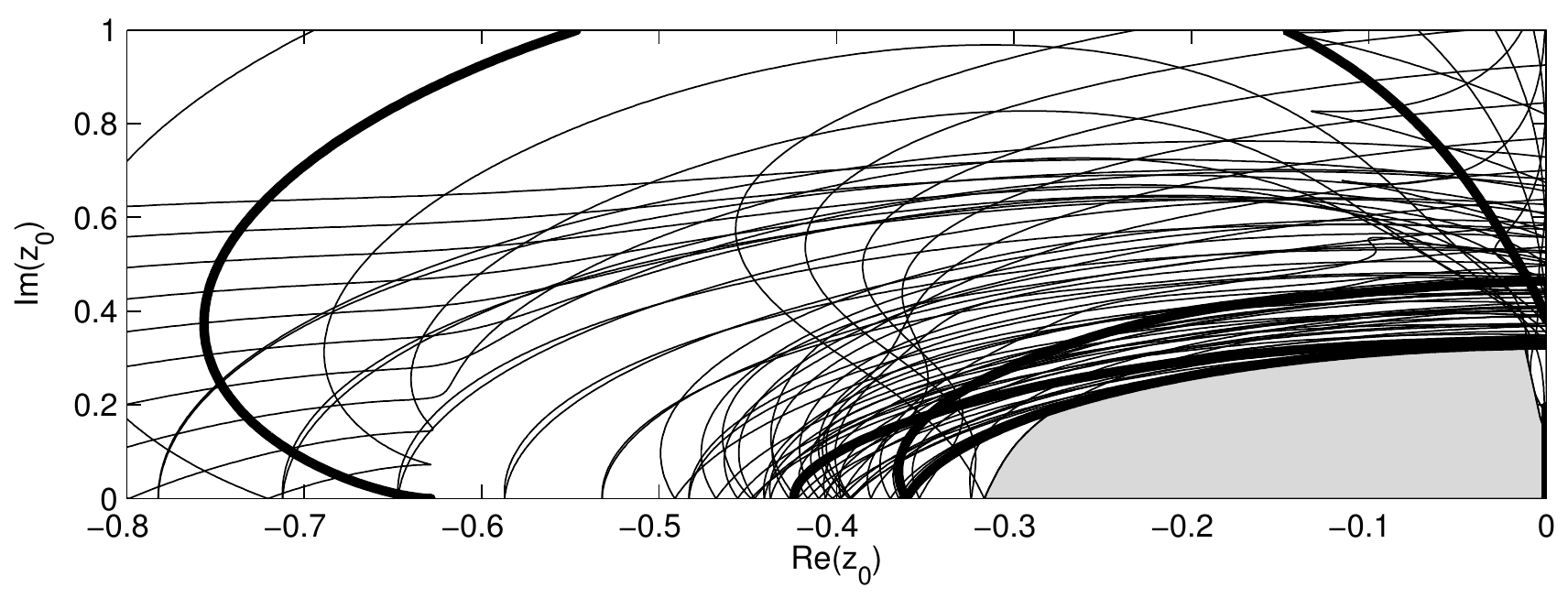}
\caption{Stability regions $\mathcal{S}_{\pi/2,y}$, $y=-2.0,-1.8,\ldots,2.0$ (thin lines),
$\mathcal{S}_{\pi/2}$ (shaded region), and $\mathcal{S}_E$ (thick line)
for $\beta_{21}\approx -0.00516$, $\beta_{31}\approx -0.939$,
$\beta_{32}\approx 1.18$, $\beta_{41}\approx -1.71$, $\beta_{42}\approx 2.07$,
and $\beta_{43}\approx 0.32$} \label{fig5.14}
\end{center}
\end{figure}
\begin{figure}[t!h!b!]
\begin{center}
\includegraphics[width=0.75\textwidth]{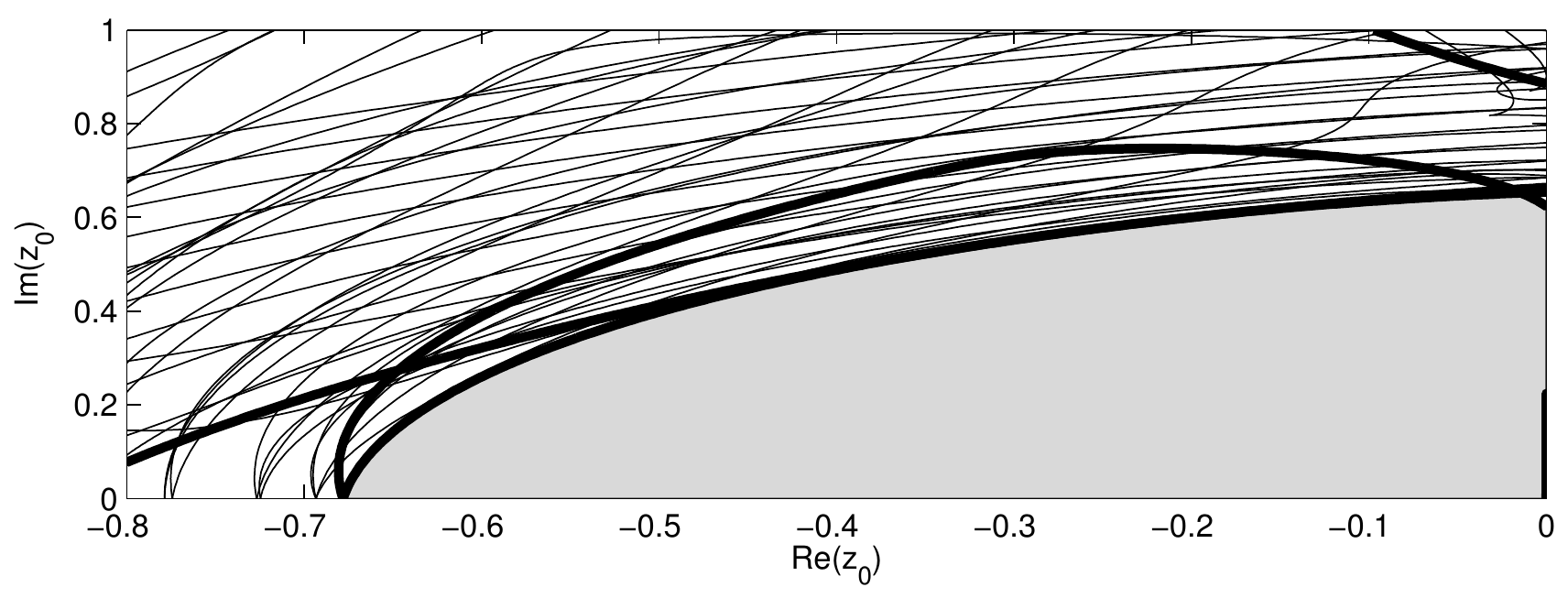}
\caption{Stability regions $\mathcal{S}_{\pi/4,y}$, $y=-2.0,-1.8,\ldots,2.0$ (thin lines),
$\mathcal{S}_{\pi/4}$ (shaded region), and $\mathcal{S}_E$ (thick line)
for $\beta_{21}\approx 0.0964$, $\beta_{31}\approx -0.278$,
$\beta_{32}\approx 0.464$, $\beta_{41}\approx -1.63$, $\beta_{42}\approx 2.73$,
and $\beta_{43}\approx -0.678$} \label{fig5.15}
\end{center}
\end{figure}

The coefficients $\alpha_{jk}$ of the extrapolation formula (\ref{eq1.11})
of order $p=4$ computed from the system (\ref{eq2.12}) corresponding to
$p=s=4$ take the form
$$
\alpha=\left[
\begin{array}{cccc}
0 & 0 & 0 & 1 \\
-1 & 4 & -6 & 4-\beta_{21} \\
\beta_{32}-4 & 15-4\beta_{32} & 2(3\beta_{32}-10) & 10-\beta_{31}-4\beta_{32} \\
\alpha_{41} & \alpha_{42} & \alpha_{43} & \alpha_{44}
\end{array}
\right],
$$
with
$$
\begin{array}{ll}
\alpha_{41}=\beta_{42}+4\beta_{43}-10, &
\alpha_{42}=36-4\beta_{42}-15\beta_{43}, \\
\alpha_{43}=6\beta_{42}+20\beta_{43}-45, &
\alpha_{44}=20-\beta_{41}-4\beta_{42}-10\beta_{43}.
\end{array}
$$

To investigate the stability properties of the resulting IMEX scheme
we will work with the stability polynomial
$p(w,z_0,z_1)$ obtained by multiplying stability function of
the method by a factor $(1-\lambda z_1)^4$,
where $\lambda$ is the diagonal element of the matrix $\mbf{A}$.
It can be verified that
this polynomial takes the form
$$
\begin{array}{l}
p(w,z_0,z_1) \ = \
(1-\lambda z_1)^4w^8-p_7(z_0,z_1)w^7
+p_6(z_0,z_1)w^6-p_5(z_0,z_1)w^5 \\ [2mm]
\quad\quad + \
p_4(z_0,z_1)w^4-p_3(z_0,z_1)w^3+p_2(z_0,z_1)w^2-p_1(z_0,z_1)w+p_0(z_0,z_1),
\end{array}
$$
with coefficients $p_0(z_0,z_1)$, $p_1(z_0,z_1)$, $p_2(z_0,z_1)$,
$p_3(z_0,z_1)$, $p_4(z_0,z_1)$, \mbox{$p_5(z_0,z_1)$}, $p_6(z_0,z_1)$, and $p_7(z_0,z_1)$ which
are polynomials of degree less than or equal to $4$ with respect to
$z_0$ and $z_1$. These coefficients
depend also on $\beta_{21}$, $\beta_{31}$, $\beta_{32}$, $\beta_{41}$,
$\beta_{42}$, and $\beta_{43}$.

We have performed a computer search in the parameter space
$\beta_{21}$, $\beta_{31}$, $\beta_{32}$, $\beta_{41}$, $\beta_{42}$,
and $\beta_{43}$ looking first for methods
for which the stability region $\mathcal{S}_E$ of the explicit method
is maximal. This corresponds to the parameter values
$
\beta_{21}\approx 0.0625
$,
$
\beta_{31}\approx -0.355
$,
$
\beta_{32}\approx 0.272
$,
$
\beta_{41}\approx -2.84
$,
$
\beta_{42}\approx 3.49
$,
$
\beta_{43}\approx -1.06
$,
for which the area of $\mathcal{S}_E$ is approximately equal to $2.82$.
The stability region $\mathcal{S}_E$ of the resulting method is plotted
on Fig.~\ref{fig5.12} by a thick line. We have also plotted stability regions
$\mathcal{S}_{\pi/4,y}$ for $y=-2.0,-1.8,\ldots,2.0$ (thin lines) and the stability
region $\mathcal{S}_{\pi/4}$ (shaded region) of the corresponding IMEX scheme.
The area of $\mathcal{S}_{\pi/4}$ is approximately equal to $0.32$.
We have also plotted on Fig.~\ref{fig5.13} stability regions
$\mathcal{S}_{\alpha}$ for $\alpha=0$, $\pi/12$, $\pi/6$,
$\pi/4$, $\pi/3$, $5\pi/12$, and $\pi/2$ corresponding to the same values
of $\beta_{ij}$.
We can see in particular that the region $\mathcal{S}_{\pi/2}$ is quite small, its area is
approximately equal to $0.0069$.

As in Section~\ref{sec5.3} we have also performed a computer search looking
directly for methods for
which stability regions $\mathcal{S}_{\alpha}$ are maximal for some
fixed values of $\alpha$. For $\alpha=\pi/2$ this corresponds to
the parameter values $\beta_{21}\approx -0.00516$, $\beta_{31}\approx -0.939$,
$\beta_{32}\approx 1.18$, $\beta_{41}\approx -1.71$, $\beta_{42}\approx 2.07$,
and $\beta_{43}\approx 0.32$, for which the area of $\mathcal{S}_{\pi/2}$
is approximately equal to $0.16$.
For $\alpha=\pi/4$ this corresponds to
the parameter values $\beta_{21}\approx 0.0964$, $\beta_{31}\approx -0.278$,
$\beta_{32}\approx 0.464$, $\beta_{41}\approx -1.63$, $\beta_{42}\approx 2.73$,
and $\beta_{43}\approx -0.678$, for which the area of $\mathcal{S}_{\pi/4}$
is approximately equal to $0.65$.
We have plotted on Fig.~\ref{fig5.14} and Fig.~\ref{fig5.15}
the stability regions $\mathcal{S}_E$ of the resulting explicit
methods (thick lines), the regions $\mathcal{S}_{\alpha,y}$ for
$y=-2.0,-1.8,\ldots,2.0$ (thin lines) and stability regions $\mathcal{S}_{\alpha}$
of IMEX schemes (shaded regions) for $\alpha=\pi/2$ and $\alpha=\pi/4$.

\section{Numerical experiments} \label{sec:numerics}

\setcounter{equation}{0}
\setcounter{figure}{0}
\setcounter{table}{0}

The extrapolation-based IMEX GLMs constructed in Section \ref{sec:construction} have been implemented in Matlab.
The required starting values $y^{[0]}$ and $Y^{[0]}$ were computed by finite difference approximations from solutions obtained with the Matlab routine \texttt{ode15s}.

The test problem is the two dimensional shallow-water equations system \cite{swe1998},
which approximates a thin layer of fluid inside a shallow basin:
\begin{eqnarray}
 \frac{\partial}{\partial t} h + \frac{\partial}{\partial x} (uh) + \frac{\partial}{\partial y} (vh) &=& 0 \nonumber \\
 \frac{\partial}{\partial t} (uh) + \frac{\partial}{\partial x} \left(u^2 h + \frac{1}{2} g h^2\right) + \frac{\partial}{\partial y} (u v h) &=& 0  \label{swe} \\
 \frac{\partial}{\partial t} (vh) + \frac{\partial}{\partial x} (u v h) + \frac{\partial}{\partial y} \left(v^2 h + \frac{1}{2} g h^2\right) &=& 0 \;.
\nonumber
\end{eqnarray}
Here $h(t,x,y)$ is the fluid layer thickness, $u(t,x,y)$ and $v(t,x,y)$ are the components of the velocity field,
and $g$ denotes the gravitational acceleration.
The spatial domain is $\Omega = [-3,\,3]^2$ (spatial units), and the integration window is $t_0 = 0 \le t \le t_\textrm{f} = 10$ (time units).
We use reflective boundary conditions and the initial conditions at $t_0 = 0$
\begin{equation}
u(t_0,x,y)=v(t_0,x,y)=0\,,~~
h(t_0,x,y) = 1 + e^{-\|(x,y)-(c_1,c_2)\|^2_2}\,,
\end{equation}
with the Gaussian height profile $c_1 = 1/3$ and $c_2 = 2/3$.

A second order Lax-Wendroff finite difference scheme is used for space discretization, resulting in a semi-discrete ODE system of the form
\begin{equation}
\frac{\mathrm{d}}{\mathrm{d}t}U(t) = F \left(U\right) = \underbrace{ F_U \bigl(U\bigr)\cdot U(t) }_{g(U)}+  \underbrace{\left( F(U) - F_U \bigl(U\bigr)\cdot U(t) \right)}_{f(U)} \, ,
\label{semiDescreteODEswe}
\end{equation}
where $U(t)$ is a combined column vector of the discretized state variables $ (\widehat{h},\widehat{uh},\widehat{vh})$,
and $F_U=\partial F/\partial U$ be the Jacobian of right hand side.
We consider a splitting of equation \eqref{semiDescreteODEswe} into the linear part $g(U)$, considered stiff, and the nonlinear part $f(U)$,
considered non-stiff.
The linear stiff part is treated implicitly, and the non-stiff part is treated explicitly.

We compare the numerical results for the solution at the final time with a reference solution computed by the Matlab function \texttt{ode15s} with very tight tolerances $atol=rtol=10^{-14}$.
The errors are measured in $\mathcal{L}_2$ norms.
The error diagram against the time step size is presented in Fig. \ref{fig6.1}.
The observed orders for all the methods tested match the theoretical predictions.

 \begin{figure}[t!h!b!]
 \begin{center}
 \includegraphics[width=0.70\textwidth]{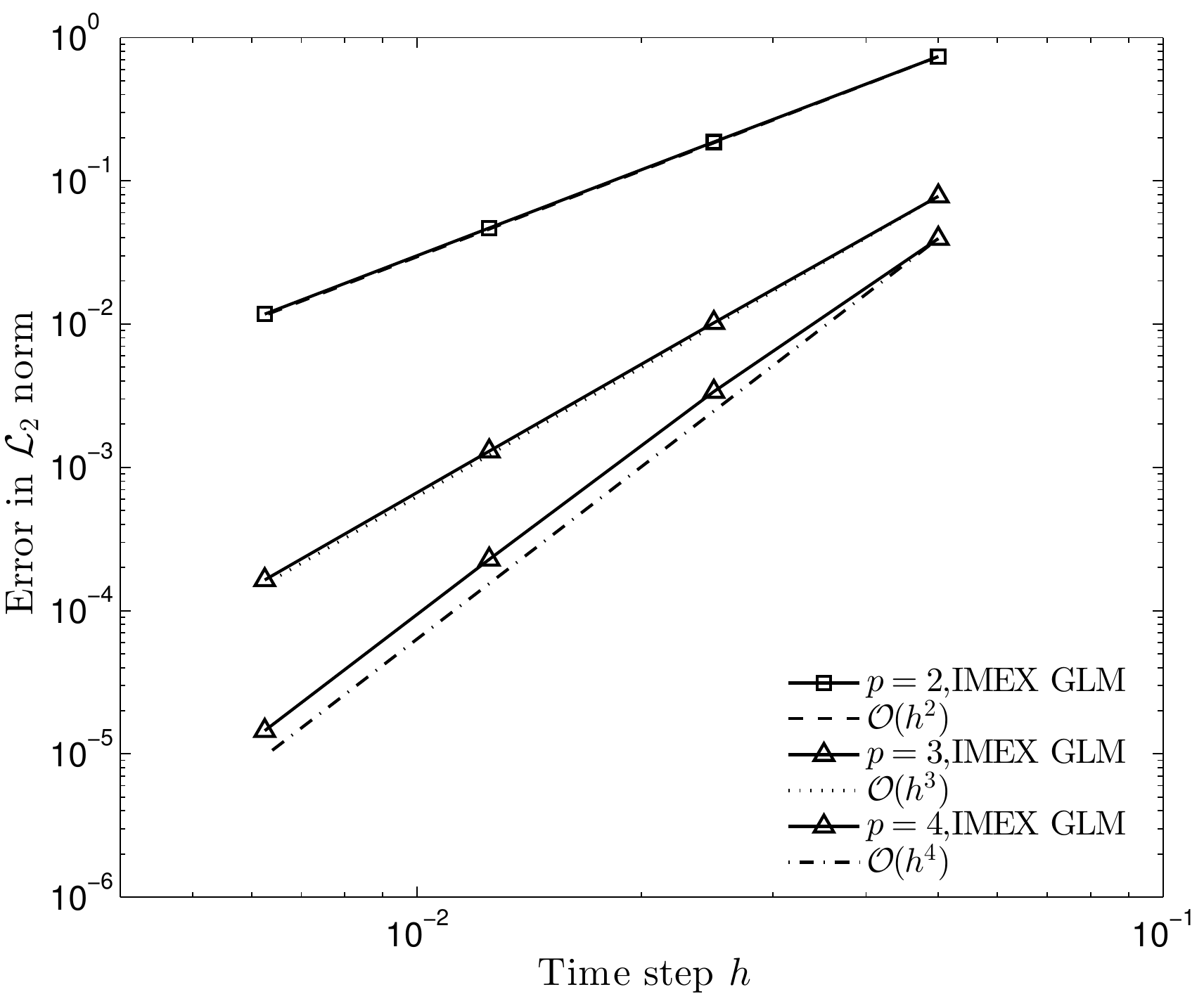}
 \caption{Error vs. time step size for several extrapolation-based IMEX-GLMs applied to the shallow water equations test problem.} \label{fig6.1}
 \end{center}
 \end{figure}

\section{Concluding remarks} \label{sec:conclusions}

General linear methods offer an excellent framework for the construction of implicit-explicit schemes.
In this paper we develop a new extrapolation-based approach for the construction of practical IMEX GLM schemes of high order and high stage order.
The accuracy, linear stability, and Prothero-Robinson convergence are analyzed.
These schemes  are particularly attractive for solving stiff problems, where other multistage methods may suffer from order reduction.

The extrapolation-based mechanism offers a systematic approach for constructing IMEX GLM schemes.
The construction starts with the selection of an implicit component method with suitable stability and order properties. The explicit component is then obtained
though an optimization procedure that maximized the combined stability region of the pair.  We apply this methodology to construct IMEX pairs of orders one to four.

Future work is planned to extend the extrapolation idea to construct other types of partitioned GLMs, including parallel time integrators, and asynchronous pairs of methods
that do not share the same abscissae.

\textbf{Acknowledgements.}
The results reported in this paper were obtained during the
visit of the first author to the Arizona State University
in January--March of $2013$. This author wish to
express her gratitude to the School
of Mathematical and Statistical Sciences for hospitality during this visit.
The work of Sandu and Zhang has been supported in part by the awards NSF OCI-8670904397, NSF CCF-0916493, NSF DMS-0915047, NSF CMMI-1130667, NSF CCF-1218454, AFOSR FA9550-12-1-0293-DEF, AFOSR 12-2640-06, and by the Computational Science Laboratory at Virginia Tech.

\end{document}